\newtheorem{tm}{Theorem}[section]
\newtheorem{rk}{Remark}[section]
\newtheorem{prop}{Proposition}[section]
\newtheorem{lm}{Lemma}[section]
\newtheorem{cor}{Corollary}[section]
\newcommand{\E}{\mathbb E}
\newcommand{\PP}{\mathbb P}
\newcommand{\N}{\mathbb N}
\newcommand{\R}{\mathbb R}
\newcommand{\LL}{\mathcal L}
\newcommand{\OO}{\mathcal O}
\newcommand{\HH}{\mathbb H}
\newcommand{\FFF}{\mathscr F}
\newcommand{\<}{\langle}
\renewcommand{\>}{\rangle}
\newcommand{\TheTitle}{Wellposedness and regularity estimate for stochastic Cahn--Hilliard equation with unbounded noise diffusion} 
\newcommand{\TheAuthors}{Jianbo Cui and  Jialin Hong}
\title{{\TheTitle}\thanks{This work was funded by National Natural Science Foundation of China (No.
91630312, No. 91530118, No.11021101 and No. 11290142).}}
\author{ Jianbo Cui 
\thanks{ School of Mathematics, Georgia Institute of Technology, Atlanta, GA 30332, USA
(\email{jcui82@gatech.edu} (corresponding author))
}
\and 
Jialin Hong 
\thanks{ LSEC, ICMSEC, 
Academy of Mathematics and Systems Science, Chinese Academy of Sciences, Beijing,  100190, China\qquad  School of Mathematical Science, University of Chinese Academy of Sciences, Beijing, 100049, China 
(\email{hjl@lsec.cc.ac.cn})}
}
\begin{document}

\maketitle

\begin{abstract}
In this article, we consider the one dimensional stochastic Cahn--Hilliard equation driven by multiplicative space-time white noise with diffusion coefficient of sublinear growth. 
By introducing the spectral Galerkin method, we first obtain the well-posedness of the approximated equation in finite dimension.  
Then with the help of the semigroup theory and the factorization method, the approximation processes is shown to possess many desirable properties.
Further, we show that the approximation process is strongly convergent in certain Banach space via the interpolation inequality and variational approach.  Finally, 
the global existence and 
regularity estimate of the unique solution process are proven by means of  
the strong convergence of the approximation process.
\end{abstract}

\begin{keywords}stochastic Cahn--Hilliard equation,
multiplicative space-time white noise,
spectral Galerkin method,
global existence,
regularity estimate
\end{keywords}

\begin{AMS}
{60H15}, {60H35, 35R60.}
\end{AMS}

\section{Introduction}

In this article, we consider the following stochastic  Cahn--Hilliard equation with multiplicative space-time white noise 
\begin{align}\label{spde}
dX(t)+A(AX(t)+F(X(t)))dt&=G(X(t))dW(t),\quad t\in (0,T],\\\nonumber
X(0)&=X_0.
\end{align}
Here $0<T<\infty$, $H:=L^2(\mathcal O)$ with $\mathcal O=(0,L), L>0$, 
$-A:D(A)\subset H \to H$ is the  Laplacian operator under homogenous Dirichlet or Neumman boundary condition, and  $\{W(t)\}_{t\ge 0}$ is a generalized  Wiener process on a filtered probability space $(\Omega,\mathcal F,\{\mathcal F_t\}_{t\ge 0},\PP)$. 
The nonlinearity $F$ is assumed to be the Nemytskii operator of $f'$, where $f$ is a polynomial of degree 4, i.e., $c_4\xi^4+c_3\xi^3+c_2\xi^2+c_1\xi+c_0$ with $c_i\in \R$, $i=0,\cdots,4$, $c_4>0$.
A typical example is  the double well potential $f=\frac 14(\xi^2-1)^2$.  For more general drift nonlinearities, we refer to \cite{DG11} and references therein.
The diffusion coefficient $G$ is assumed to be the
Nemytskii operator of $g$, where $g$ is a global Lipschitz function with the sublinear growth condition $|g(\xi)|\le C(1+|\xi|^{\alpha}), \alpha<1$. When $G=I$, Eq. \eqref{spde} corresponds to the stochastic Cahn--Hilliard--Cook equation. This equation is used to describe the complicated phase separation and coarsening phenomena in a melted alloy that is quenched to a temperature at which only two different concentration phases can exist stably (see e.g. \cite{ABK12,CH58,NS84}). The physical importance of the Dirichlet problem was pointed out to us by M. E. Gurtin: it governs the propagation of a solidification front into an ambient medium which is at rest relative to the front (see e.g. \cite{DN91}). 

The existence and uniqueness of the solution to Eq. \eqref{spde} have already been  
proven by \cite{DD96} in the case of $G=I$ for the space dimension $d=1$.
Moreover, if $G=I$ but $d\ge2$, the driving noise should be more regular than the space-time white noise. When $G$ is a bounded diffusion coefficient, the authors in \cite{Car01} obtain the 
global existence and path regularity of the solution in $d=1$, and the local existence of the solution in higher dimension $d=2,3$. Recently, the authors in \cite{AKM16} extend the results on the local existence and uniqueness of the solution in the case that $|g(\xi)|\le C(1+|\xi|^{\alpha})$, $\alpha\in (0,1]$, $d\le 3$. Meanwhile, the global existence of the solution is achieved under the restriction that $\alpha<\frac 13$, $d=1$. However, for the global existence of the solution, it is still unknown whether the sublinear growth condition $\alpha <\frac 13$ could be 
extended to the general sublinear growth condition, i.e.,
$|g(\xi)|\le C(1+|\xi|^{\alpha}), \alpha \in (0,1)$, which is one main motivation of this article.

To study such problem, 
our strategy is different from that in the existing literature 
(see e.g. \cite{Car01,AKM16}). 
Instead of introducing an appropriated cut-off SPDE, we  
firstly use the spectral Galerkin method to discretize Eq. \eqref{spde} and get the spectral Galerkin approximation 
\begin{align}\label{sga}
dX^N(t)+A(AX^N(t)+P^NF(X^N(t)))dt&=P^NG(X^N(t))dW(t),\quad t\in (0,T]\\\nonumber
X^N(0)&=P^NX_0,
\end{align}
where $N\in \N^+$.
Then by making use of the factorization formula and the equivalent random form of the semi-discrete equation, we show the well-posedness of  the semi-discrete equation \eqref{sga}, as well as its uniform a priori estimate and regularity estimate. Furthermore, we show that the limit of the solution of the spectral Galerkin method exists globally and  is the unique mild solution of Eq. \eqref{spde}.
As a consequence, the exponential integrability property, the optimal temporal and spatial regularity estimates of the exact solution are proven. 
Meanwhile,  with the help of the Sobolev interpolation equality and the smoothing effect of the semigroup $S(t):=e^{-A^2t}$, the sharp spatial strong convergence rate of the spectral Galerkin method is established under homogenous Dirichlet boundary condition. To the best of our knowledge, this is not only the new result on the global  existence and regularity estimate of the solution, but also the first result on the strong convergence rate of  numerical approximation for the stochastic Cahn--Hilliard equation driven by multiplicative space-time white noise.

 The rest of this article is organized as follows. In Section \ref{sec-2} the setting and assumptions used are formulated. In Section \ref{sec-pri}, we prove several uniform a priori estimates and regularity estimates of the spatial spectral Galerkin method. The strong convergence analysis of the spatial spectral Galerkin method is presented in Section \ref{sec-str}.
Our main result which states existence, uniqueness and regularity of solutions of Eq. \eqref{spde} with nonlinear multiplicative noise is presented in Section \ref{sec-wel}.

\section{Preliminaries}\label{sec-2} 
In this section, we present some preliminaries and notations, as well as the assumptions on Eq. \eqref{spde}.

Given two separable Hilbert spaces $(\mathcal H, \|\cdot \|_{\mathcal H})$ and $(\widetilde  H,\|\cdot \|_{\widetilde H})$, 
$\LL(\mathcal H, \widetilde H)$ and $\LL_1(\mathcal H, \widetilde H)$  are the Banach spaces of all linear bounded operators 
and  the nuclear operators from $\mathcal H$ to $\widetilde H$, respectively. 
The trace of an operator $\mathcal T\in \LL_1(\mathcal H)$
is $tr[\mathcal T]=\sum_{k\in \N}\<\mathcal Tf_k,f_k\>_{\mathcal H}$, where $\{f_k\}_{k\in \N}$ ($\N=\{0,1,2,\cdots\}$) is any orthonormal basis of $\mathcal H$.
In particular, if $\mathcal T\ge 0$, $tr[\mathcal T]=\|\mathcal T\|_{\mathcal L_1}$.
Denote by $\LL_2(\mathcal H,\widetilde H)$ the space 
of Hilbert--Schmidt operators from $\mathcal H$ into $\widetilde H$, equipped with the usual norm given by  $\|\cdot\|_{\LL_2(\mathcal H,\widetilde H)}=(\sum_{k\in \N}\|\cdot f_k\|^2_{\widetilde H})^{\frac{1}{2}}$.
The following useful property and inequality hold 
\begin{align}\label{tr}
&\|\mathcal S \mathcal T\|_{\mathcal L_2(\mathcal H,\widetilde H)}\le \|\mathcal S\|_{\mathcal L_2(\mathcal H,\widetilde H)}\|\mathcal T\|_{\mathcal L(\mathcal H)},\quad \mathcal T \in \mathcal L(\mathcal H), \;\;\mathcal S\in\mathcal L_2(\mathcal H,\widetilde H),\\\nonumber
&tr[\mathcal Q]=\|\mathcal Q^{\frac 12}\|^2_{L_2(\mathcal H)}=\|\mathcal T\|^2_{ \mathcal L_2(\widetilde H,\mathcal H)},\quad \mathcal Q=\mathcal T \mathcal T^{*},\;\; \mathcal T\in \mathcal L_2(\widetilde H,\mathcal H),
\end{align}
where $\mathcal T^*$ is the adjoint operator of $\mathcal T$.

Given a Banach space $(\mathcal E,\|\cdot\|_{\mathcal E})$ and $T\in \mathcal L(\mathcal H,\mathcal E)$, we denote by $\gamma( \mathcal H, \mathcal E)$ the space of $\gamma$-radonifying operators endowed with the norm
$\|\mathcal T\|_{\gamma(\mathcal  H, \mathcal E)}=(\widetilde \E\|\sum_{k\in\N}\gamma_k \mathcal Tf_k \|^2_{\mathcal E})^{\frac 12}$,
where $(\gamma_k)_{k\in\N}$ is a Rademacher sequence on a
probability space $(\widetilde \Omega,\widetilde \FFF, \widetilde \PP)$.
For convenience, let   $L^q=L^q(\OO)$, $2\le q<\infty$ equipped with the usual inner product and norm.
We also need the following Burkerholder inequality (see e.g. \cite{VVW08}), 
\begin{align}\label{Burk}
\left\|\sup_{t\in [0,T]}\Big\|\int_0^t \phi(r)d\widetilde W(r)\Big\|_{L^q}\right\|_{L^p(\Omega)}
&\le 
C_{p,q}\|\phi\|_{L^p(\Omega;  L^2([0,T]; \gamma(H;L^q))}\\\nonumber 
&\le  C_{p,q}\Big(\E\Big(\int_0^T\Big\|\sum_{k\in \N} (\phi(t) e_k)^2\Big\|_{L^{
\frac q2}}dt\Big)^{\frac p2}\Big)^{\frac 1p}, 
 \end{align}
 where $\widetilde W$ is the $H$-valued cylindrical 
 Wiener process and  $\{e_k\}_{k\in \N}$ is any orthonormal basis of $H$. 

Next, we introduce some assumptions and spaces associated with $ A$.
 We denote by 
$H^k:=H^k(\mathcal O)$ the standard Sobolev space and $E:=\mathcal C(\mathcal O)$. 
For convenience, we mainly focus on the well-posedness and numerical approximation for Eq. \eqref{spde} under homogenous Dirichlet boundary condition. We would like to mention that the approach for proving the global existence of the unique solution is also available for Eq. \eqref{spde} under homogenous Neumman  boundary condition. 
Denote $A=-\Delta $ the Dirichlet Laplacian operator with 
$$D(A)=\left\{v\in H^2(\mathcal O): v =0\;\; \text{on} \;\; \partial \mathcal O\right\}.$$
It is known that $A$ is a positive definite, self-adjoint and unbounded linear operator on $H$. Thus there exists an orthonormal 
eigensystem $\{(\lambda_j,e_j)\}_{j\in \N}$ such that $0<\lambda_1\le \cdots \le \lambda_j\le \cdots$ with $\lambda_j \sim j^{2}$ and $\sup_{j\in \N^+}\|e_j\|_E<\infty$.
We  define 
$\HH^{\alpha}$, $\alpha\in \R$ as the space of the series $v:=\sum_{j=1}^{\infty}v_je_j$, $v_j\in \R$, such that
$\|v\|_{\HH^{\alpha}}:=(\sum_{j=1}^{\infty}\lambda_j^{\alpha}v_j^2)^{\frac 12}<\infty$. Equipped with the norm $\|\cdot\|_{\HH^{\alpha}}$ and corresponding inner product, the Hilbert space $\HH^{\alpha}$ equals $D(A^{\frac \alpha 2})$. It is obvious that $H=\HH$. We denote $\|\cdot\|=\|\cdot\|_{\HH}.$ The following smoothing effect of the analytical semigroup $S(t)=e^{-t A^2},t>0$ (see e.g. \cite{EN00}), 
\begin{align}\label{smo-eff}
\|A^{\beta}S(t)v\|&\le Ct^{-\frac \beta 2}\|v\|, \;\beta>0, \; v\in \mathbb H
\end{align}
and the contractivity property of $S(t)$ (see e.g. \cite[Appendix B]{PZ07}),
\begin{align}\label{con-pro}
\|S(t)v\|_{L^q}&\le Ct^{-\frac 14(\frac 1p-\frac 1q)} \|v\|_{L^p},\; 1\le p\le q<\infty ,\;  v\in L^p,\\\nonumber 
\|S(t)v\|_{E}&\le Ct^{-\frac 1{4p}} \|v\|_{L^p}, v\in L^p,
\end{align}
will be used frequently.
Throughout this article, the Wiener process $W$ is assumed to be the $H$-valued cylindrical Wiener process, which implies that for any 
$\gamma\in (0,\frac 32) $,  $\|A^{\frac {\gamma-2}2}Q^{\frac 12}\|_{\LL_2(\HH)}<\infty$.  We denote by $C$ a generic constant which may depend on several parameters but never on the projection parameter $N$ and may change from occurrence to occurrence. We also remark that the approach for proving the global existence of the unique solution 
is available for the cases of higher dimension and more regular $Q$-Wiener process.

\section{A priori estimate and regularity estimate of the spectral Galerkin method}
\label{sec-pri}

In this section, we give the a priori estimate and regularity estimate of the solution of Eq. \eqref{sga}. 
Notice that Eq. \eqref{sga} is equivalent to the following random PDE and the equation of the discrete stochastic convolution $Z^N$, 
\begin{align}\label{rand-y}
&dY^N(t)+A(AY^N(t)+P^NF(Y^N(t)+Z^N(T)))dt=0,\;
Y^N(0)=P^NX_0,\\\label{sto-con}
&dZ^N(t)+A^2Z^N(t)dt=P^NG(Y^N(t)+Z^N(t))dW(t),\;
Z^N(0)=0.
\end{align}
The above decomposition is inspired by \cite{Cer03} where the authors use similar decomposition to show the well-posedness of stochastic reaction-diffusion systems.
In the following, we present the  
a priori and regularity estimates of $Z^N$ and $Y^N$.

\begin{lm}\label{lm-xn}
Let $X_0\in \HH$, $T>0$ and $q\ge1$.
There exists a unique solution  $X^N$  of Eq. \eqref{sga}  satisfying 
\begin{align}\label{p-mom}
\sup_{t\in [0,T]}\E\Big[ \big\|X^N(t)\big\|_{\HH^{-1}}^q\Big]
&\le C(X_0,T,q),
\end{align}
where $ C(X_0,T,q)$ is a positive constant.
\end{lm}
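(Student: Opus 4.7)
The plan is to establish local existence and uniqueness of \eqref{sga} as a finite-dimensional SDE on $P^N H$ and then upgrade to a global solution via an a priori moment bound in $\HH^{-1}$. Since $F$ is the Nemytskii operator of a cubic polynomial (locally Lipschitz on any ball of $P^N H$) and $P^N G(\cdot)$ is globally Lipschitz because $g$ is, classical finite-dimensional SDE theory in $\R^N$ yields a unique continuous strong solution $X^N$ up to a possible explosion time $\tau_N\le T$. Hence it suffices to prove \eqref{p-mom} with a constant that does not depend on $\tau_N$; such a bound automatically forces $\tau_N = T$ almost surely.

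For the a priori bound, I would apply Itô's formula to $\varphi(x)=\|x\|_{\HH^{-1}}^{2m}$ for an integer $m\ge q/2$. Because $A^{-1}$ commutes with $A$, the drift contribution reduces to
\begin{align*}
-2m\|X^N\|_{\HH^{-1}}^{2(m-1)}\Bigl[\|X^N\|_{\HH^{1}}^{2}+\< X^N,F(X^N)\>\Bigr],
\end{align*}
and the structural lower bound $\< \xi,F(\xi)\>\ge c_0\xi^{4}-C$, valid because $c_4>0$, produces a strong quartic dissipation $-c_0 m\,\|X^N\|_{\HH^{-1}}^{2(m-1)}\|X^N\|_{L^4}^4$. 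The Itô correction involves the trace term and a rank-one term; expanding in the eigenbasis and using $\sup_j\|e_j\|_E<\infty$ together with $\sum_j\lambda_j^{-1}<\infty$ (valid since $\lambda_j\sim j^{2}$) gives
\begin{align*}
\|A^{-1/2}P^{N}G(X^N)\|_{\LL_2(H)}^{2}=\sum_{j=1}^{N}\lambda_j^{-1}\int_{\OO}g(X^N)^{2}e_j^{2}\,dx\le C\bigl(1+\|X^N\|_{L^{2\alpha}}^{2\alpha}\bigr),
\end{align*}
while the rank-one term $\|(P^{N}G(X^N))^{*}A^{-1}X^N\|^{2}$ is controlled by a similar quantity after using the one-dimensional embedding $A^{-1}:\HH^{-1}\to\HH^{1}\hookrightarrow E$.

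Since the sublinear exponent satisfies $2\alpha<4$, Young's inequality allows the polynomial noise contribution to be absorbed into the quartic dissipation, leading to a pointwise differential inequality of the form
\begin{align*}
d\|X^N\|_{\HH^{-1}}^{2m}\le \Bigl[C+C\|X^N\|_{\HH^{-1}}^{2m}\Bigr]dt+dM_t,
\end{align*}
where $M_t$ is a local martingale. After localising by a sequence of stopping times exhausting $[0,\tau_N)$, taking expectations, and applying Gronwall's inequality, I obtain a uniform-in-$(t,N)$ bound on $\E\|X^N(t)\|_{\HH^{-1}}^{2m}$, which precludes explosion and yields \eqref{p-mom} for $q\in[1,2m]$ via Jensen's inequality. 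The main obstacle is the bookkeeping of the Itô correction: one has to simultaneously exploit $\sup_j\|e_j\|_E<\infty$, the summability $\sum_j\lambda_j^{-1}<\infty$, and the coercivity of $F$ to absorb the sublinear noise contribution, a mechanism that is specific to the one-dimensional setting.
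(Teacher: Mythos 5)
Your proof is correct, but it takes a genuinely different route from the paper's. The paper never applies It\^o's formula to the full process for this lemma: it splits $X^N=Y^N+Z^N$ into the random PDE \eqref{rand-y} and the stochastic convolution \eqref{sto-con}, runs a pathwise energy estimate on $\|Y^N\|_{\HH^{-1}}^2$ (which produces the quartic dissipation $\int_0^t\|Y^N\|_{L^4}^4\,ds$ with $\|Z^N\|_{L^4}^4$ as a forcing term), bounds $\E\|Z^N(s)\|_{L^p}^q$ through the mild form, the Burkholder inequality and the contractivity of $S(\cdot)$, and then closes the loop between the two estimates with Young and Gronwall. You instead apply It\^o's formula directly to the Lyapunov functional $\|X^N\|_{\HH^{-1}}^{2m}$ and absorb the It\^o correction into the quartic dissipation; your computation of the trace term $\sum_{j\le N}\lambda_j^{-1}\int_{\OO}g(X^N)^2e_j^2\,dx$ and of the rank-one term via $A^{-1}:\HH^{-1}\to\HH^1\hookrightarrow E$ is sound in $d=1$, and since $2\alpha<4$ the absorption works (in fact your argument only needs $g$ of at most linear growth at this stage). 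Your approach is shorter and more self-contained for the statement as given --- it is essentially the same Lyapunov computation the paper later performs for the exponential integrability corollary with $U(x)=\tfrac12\|x\|_{\HH^{-1}}^2$. What the paper's longer route buys is the intermediate estimates \eqref{pri-zn} on $\E\|Z^N\|_{L^p}^q$ and \eqref{pri-h-1} on $\int_0^t\|\nabla Y^N\|^2ds$ and $\int_0^t\|Y^N\|_{L^4}^4ds$, which are cited repeatedly in the subsequent lemmas (the $E$-norm bound of $Z^N$ via factorization, the $\HH$- and $\HH^{\gamma}$-regularity of $X^N$); your direct argument, while valid for \eqref{p-mom}, would not supply those byproducts and the decomposition would still have to be introduced afterwards.
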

\begin{proof}
Thanks to the fact all the norms in finite dimensional normed linear spaces are equivalent, the norm $\|\cdot\|:=\|\cdot\|_{\HH}$ and $\|\cdot\|_{\HH^{-1}}$ in $P^N(\HH)$ are equivalent up to constants depending on $N$.
The existence of a unique strong solution for Eq. \eqref{sga} in $\HH^{-1}$ can be obtained by the arguments in \cite[Chapter 3]{PR07}. However, the moment bound of the exact solution will depend on $N$ by this procedure.
To  prove \eqref{p-mom}, we need to find a proper Lyapunov functional and to derive the a priori estimate independent of $N$.
According to Eq. \eqref{rand-y}, by using the chain rule and integration by parts, we have for any $t\le T$,
\begin{align*}
\|Y^N(t)\|_{\HH^{-1}}^2
&=
\|Y^N(0)\|_{\HH^{-1}}^2
-2\int_0^t\<\nabla Y^N(s), \nabla Y^N(s)\>ds\\
&\quad-
2\int_0^t\<F(Y^N(s)+Z^N(s)),Y^N(s)\>ds\\
&=\|Y^N(0)\|_{\HH^{-1}}^2
-2\int_0^t\|\nabla Y^N(s)\|^2 ds\\
&\quad-
2\int_0^t\<F(Y^N(s)+Z^N(s)),Y^N(s)\>ds.
\end{align*}
The expression of $F$ and Young inequality implies that 
\begin{align}\label{pri-h1}
&\|Y^N(t)\|_{\HH^{-1}}^2+2\int_0^t\|\nabla Y^N(s)\|^2ds+8(c_4-\epsilon)\int_0^t\|Y^N(s)\|^4_{L^4}ds\\\nonumber
&\le \|Y^N(0)\|_{\HH^{-1}}^2
+C(\epsilon)\int_0^t(1+\|Z^N(s)\|_{L^4}^4)ds.
\end{align}
Thus it suffices to deduce the a priori estimate of $\int_0^t \|Z^N(s)\|_{L^4}^4ds$.
From the mild form of $Z^N$, the H\"older inequality, the Burkholder inequality and the contractivity of $S(\cdot)$ \eqref{con-pro}, 
it follows that for $p\ge 2$ and $q\ge 4$, 
\begin{align*}
&\E[\|Z^N(s)\|_{L^{p}}^q]\\
&=\E\Big[\Big\|\int_0^sS(s-r)P^NG(Y^N(r)+Z^N(r))dW(r)\Big\|_{L^p}^q\Big]\\
&\le 
C\E\Big[\Big(\int_0^s\Big\|S(s-r)P^NG(Y^N(r)+Z^N(r))\Big\|_{\gamma(\HH,L^p)}^2dr\Big)^{\frac q2}\Big]
\\
&\le 
C\E\Big[\Big(\int_0^s\sum_{k=1}^{\infty}\Big\|S(s-r)P^N(G(Y^N(r)+Z^N(r))e_k)\Big\|_{L^p}^2dr\Big)^{\frac q2}\Big]\\
&\le 
C\E\Big[\Big(\int_0^s(s-r)^{-\frac 12(\frac 12-\frac 1p)}\sum_{k=1}^{\infty}\Big\|S(\frac {s-r}2)P^N(G(Y^N(r)+Z^N(r))e_k)\Big\|^2dr\Big)^{\frac q2}\Big].
\end{align*}
The Paserval equality and the sublinear growth of $G$ yield that
\begin{align*}
&\E[\|Z^N(s)\|_{L^{p}}^q]\\
&\le
C\E\Big[\Big(\int_0^s(s-r)^{-\frac 12(\frac 12-\frac 1p)}\sum_{j,k=1}^{\infty}\Big\<G(Y^N(r)+Z^N(r))e_k,e^{-\frac 12\lambda_j^2(s-r)}e_j\Big\>^2dr\Big)^{\frac q2}\Big]\\
&=C\E\Big[\Big(\int_0^s(s-r)^{-\frac 12(\frac 12-\frac 1p)}\sum_{j=1}^{\infty}e^{-\lambda_j^2(s-r)}\|G(Y^N(r)+Z^N(r))e_j\|^2dr\Big)^{\frac q2}\Big]\\
&\le C\E\Big[\Big(\int_0^s(s-r)^{-\frac 12(\frac 12-\frac 1p)}\sum_{j=1}^{\infty}e^{-\lambda_j^2(s-r)}\|G(Y^N(r)+Z^N(r))\|^2dr\Big)^{\frac q2}\Big]\\
&\le C\E\Big[\Big(\int_0^s(s-r)^{-\frac 12+\frac 1{2p}}\|G(Y^N(r)+Z^N(r))\|^2dr\Big)^{\frac q2}\Big]\\
&\le C\E\Big[\Big(\int_0^s(s-r)^{\frac {-p+1}{2p}}(1+\|Y^N(r)\|^{2\alpha}+\|Z^N(r))\|^{2\alpha})dr\Big)^{\frac q2}\Big]\\
&\le C(\int_0^s(s-r)^{\frac {-p+1}{p}}dr)^{\frac q4} \E\Big[\Big(1+\int_0^{s}(\|Y^N(r)\|^{4\alpha}+\|Z^N(r))\|^{4\alpha})dr\Big)^{\frac q4}\Big].
\end{align*}
Using the Young inequality, we obtain for $0\le s\le t$,
\begin{align*}
\E[\|Z^N(s)\|_{L^{p}}^q]
&\le  Cs^{\frac q{4p}} \Big(1+\E[(\int_0^s\|Y^N(r)\|^{4 \alpha}dr)^{\frac q4}]+\int_0^s\E[\|Z^N(r)\|^{q}]dr\Big)
\\ 
&\le Cs^{\frac q{4p}} \Big(1+\E[(\int_0^s\|Y^N(r)\|_{L^p}^{4 \alpha}dr)^{\frac q4}]+\int_0^s\E[\|Z^N(r)\|_{L^p}^{q}]dr\Big).
\end{align*}
Since the moment bound of $Z^N$ and $Y^N$ are finite depending on $N$, we can apply the Gronwall's inequality and get that for  $0\le s\le T$,
\begin{align}\label{pri-zn}
\E[\|Z^N(s)\|_{L^{p}}^q]
&\le C(T)\Big(1+\E[(\int_0^s\|Y^N(r)\|_{L^p}^{4 \alpha}dr)^{\frac q4}]\Big).
\end{align}

Now taking $k$th moment, $k\in \N^+$ on \eqref{pri-h1} and letting $p=4$, $q=4k$, we have 
\begin{align*}
&\E\Big[(\int_0^t\|Y^N(s)\|^4_{L^4}ds)^k\Big]\\
&\le C\|Y^N(0)\|_{\HH^{-1}}^{2k}
+C(\epsilon)\int_0^t(1+\E[\|Z^N(s)\|_{L^4}^{4k}])ds\\
&\le C\|Y^N(0)\|_{\HH^{-1}}^{2k}+C(\epsilon,T)\Big(C(\epsilon_1)+\epsilon_1 \int_0^t\E[(\int_0^s\|Y^N(r)\|_{L^4}^{4}dr)^{k}]ds\Big),
\end{align*}
where $\epsilon_1>0$ is a small number such that 
$C(\epsilon,T)\epsilon_1T<\frac 12$. 
The above estimation leads to 
\begin{align*}
\E\Big[(\int_0^t\|Y^N(s)\|^4_{L^4}ds)^k\Big]
&\le  C\|Y^N(0)\|_{\HH^{-1}}^{2k}+C(k,\epsilon,\epsilon_1,T),
\end{align*}
which in turns yields that for $k\in \N^+$,
\begin{align}\label{pri-h-1}
&\E\Big[\|Y^N(t)\|_{\HH^{-1}}^{2k}\Big]+\E\Big[(\int_0^t\|\nabla Y^N(s)\|^2ds)^k\Big]+\E\Big[(\int_0^t\|Y^N(s)\|^4_{L^4}ds)^{k}\Big]\\\nonumber
&\le C(X_0,T,k).
\end{align}
Based on the a priori estimates of $Z^N$  and $Y^N$ in $L^{p}$ and $\HH^{-1}$, respectively, we complete the proof via the H\"older inequality.
\end{proof}

\begin{lm}\label{cor-zn}
Let  $X_0\in \HH$, $T>0$ and $q\ge1$.
There exists  a positive constant $ C(X_0,T,q)$ such that 
\begin{align}\label{pri-sup-zn}
\E\Big[\sup_{t\in [0,T]} \big\|Z^N(t)\big\|_{E}^q\Big]
&\le C(X_0,T,q).
\end{align}
\end{lm}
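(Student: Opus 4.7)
The plan is to apply the factorization method to $Z^N$ in order to convert the supremum-in-time estimate in $E$ into a pointwise moment estimate in $L^p$ on an auxiliary process, and then to reuse the Burkholder--Parseval computation already developed in the proof of Lemma \ref{lm-xn}.

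For a parameter $\beta\in(0,1)$ to be chosen later, I will introduce
\[
Y_\beta(s):=\int_0^s(s-r)^{-\beta}S(s-r)P^N G(Y^N(r)+Z^N(r))\,dW(r),
\]
so that the factorization identity
\[
Z^N(t)=\frac{\sin(\pi\beta)}{\pi}\int_0^t(t-s)^{\beta-1}S(t-s)Y_\beta(s)\,ds
\]
replaces the stochastic convolution by a deterministic integral. The first step is to control this deterministic integral: using the contractivity estimate \eqref{con-pro}, $\|S(t-s)Y_\beta(s)\|_E\le C(t-s)^{-1/(4p)}\|Y_\beta(s)\|_{L^p}$ for $p\ge2$, and then H\"older's inequality in time (with conjugate exponents $q/(q-1)$ and $q$) will give, whenever $\beta>1/q+1/(4p)$,
\[
\sup_{t\in[0,T]}\|Z^N(t)\|_E^q\le C(T,p,q,\beta)\int_0^T\|Y_\beta(s)\|_{L^p}^q\,ds.
\]

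The second step is to estimate $\E\|Y_\beta(s)\|_{L^p}^q$. Applying the Burkholder inequality \eqref{Burk} and running verbatim the Parseval/trace manipulation from the proof of Lemma \ref{lm-xn}, but keeping the extra weight $(s-r)^{-2\beta}$ intrinsic to $Y_\beta$, will yield
\[
\E\bigl[\|Y_\beta(s)\|_{L^p}^q\bigr]\le C\,\E\Bigl[\Bigl(\int_0^s(s-r)^{-2\beta-\tfrac12+\tfrac1{2p}}\bigl(1+\|Y^N(r)\|^{2\alpha}+\|Z^N(r)\|^{2\alpha}\bigr)\,dr\Bigr)^{q/2}\Bigr].
\]
The remaining factor $1+\|Y^N\|^{2\alpha}+\|Z^N\|^{2\alpha}$ is exactly the quantity already controlled in Lemma \ref{lm-xn} through \eqref{pri-zn}--\eqref{pri-h-1}; a final H\"older-in-time step (using $2\alpha<2<4$) together with those a priori bounds will close the estimate.

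Both singular kernels above are integrable precisely when
\[
\frac1q+\frac1{4p}<\beta<\frac14+\frac1{4p},
\]
a window which is nonempty if and only if $q>4$. For such $q$, fixing for instance $p=2$ and any admissible $\beta$ will complete the argument; the case $q\in[1,4]$ will follow from Jensen's inequality applied to the bound obtained for $q_0=5$. The main technical obstacle is precisely this narrow admissible window for $\beta$: the lower bound arises from the outer factorization integral, the upper bound from the Burkholder estimate on $Y_\beta$, and reconciling the two forces $q$ to exceed $4$, with the remaining range recovered by Jensen. Everything else is routine bookkeeping of H\"older exponents and invocations of the a priori estimates already proved.
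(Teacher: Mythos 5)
Your strategy coincides with the paper's: factorize $Z^N$, estimate the auxiliary process $Y_\beta$ through the Burkholder--Parseval computation already carried out in Lemma \ref{lm-xn}, and close with a H\"older-in-time step and the a priori bounds \eqref{pri-zn} and \eqref{pri-h-1}. (The paper fixes the intermediate space to be $\HH$, i.e.\ $p=2$, using $\|S(t)v\|_E\le Ct^{-1/8}\|v\|$; keeping a general $L^p$ as you do changes nothing essential.)

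There is, however, a quantitative slip in your admissible window for $\beta$, and it sits exactly where the standing hypothesis $\alpha<1$ must enter. Your upper bound $\beta<\frac14+\frac1{4p}$ records only the raw integrability of the kernel $(s-r)^{-2\beta-\frac12+\frac1{2p}}$. But the random factor $1+\|Y^N(r)\|^{2\alpha}+\|Z^N(r)\|^{2\alpha}$ cannot simply be pulled out: at this stage of the paper the only available control of $Y^N$ in a space at least as strong as $\HH$ is the time-integrated bound $\E\big[(\int_0^t\|Y^N(s)\|_{L^4}^4ds)^k\big]\le C$ from \eqref{pri-h-1} (the pointwise-in-time moment bound is only in $\HH^{-1}$). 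Hence the ``final H\"older-in-time step'' you invoke must use exponents $(\frac{l}{l-1},l)$ with $2\alpha l\le 4$, i.e.\ $l\le 2/\alpha$, and the kernel condition becomes $(2\beta+\frac12-\frac1{2p})\frac{l}{l-1}<1$, which forces $\beta<\frac14-\frac\alpha4+\frac1{4p}$ rather than $\beta<\frac14+\frac1{4p}$. The window is then nonempty iff $q>\frac{4}{1-\alpha}$, not $q>4$; in particular your choice $q_0=5$ only works for $\alpha<\frac15$ and fails for $\alpha$ close to $1$. The repair is immediate --- take $q_0$ large depending on $\alpha$ and keep your Jensen reduction for smaller $q$ --- and is precisely what the paper encodes by choosing $l\in(2,2/\alpha)$ with $(2\alpha_1+\frac14)\frac{l}{l-1}<1$. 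With that correction your argument goes through.
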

\begin{proof}
By using the factorization formula in \cite[Chapter 5]{Dap92}, we have that for $\alpha_1>\frac 1p+\gamma$, $p>1$, $\gamma=\frac 18$,
\begin{align*}
\E\Big[\sup_{s\in [0,T]}\|Z^N(s)\|_{E}^q\Big]
&\le C(q,T)\E\Big[\|Y_{\alpha_1,N}\|_{L^p(0,T; \HH)}^q\Big],
\end{align*}
where $Y_{\alpha_1,N}(s)=\int_0^s(s-r)^{-\alpha_1}S(s-r)
P^NG(Y^N(r)+Z^N(r))dW(r)$.
Thus it suffices to estimate $\E\Big[\|Y_{\alpha_1,N}\|_{L^p(0,T; \HH)}^q\Big]$. 
From the H\"older and Burkholder inequalities,  it follows that for $q\ge \max(p,2)$,
\begin{align*}
&\E\Big[\|Y_{\alpha_1,N}\|_{L^p(0,T; \HH)}^q\Big]\\
&=
\E\Big[ \Big(\int_0^T\Big\|\int_0^s(s-r)^{-\alpha_1}S(s-r)
P^NG(Y^N(r)+Z^N(r))dW(r)\Big\|^pds\Big)^{\frac qp}\Big]\\
&\le C(T,q)\int_0^T\E\Big[ \Big\|\int_0^s(s-r)^{-\alpha_1}S(s-r)
P^NG(Y^N(r)+Z^N(r))dW(r)\Big\|^q\Big]ds\\
&\le C(T,q)\int_0^T\E\Big[ \Big(\int_0^s(s-r)^{-2\alpha_1}\sum_{i\in \N^+}\Big\|S(s-r)
P^NG(Y^N(r)+Z^N(r))e_i\Big\|^2dr\Big)^{\frac q2}ds\Big]\\
&\le C(T,q)\int_0^T\E\Big[ \Big(\int_0^s(s-r)^{-(2\alpha_1+\frac 14)}(1+\|Y^N(r)\|^{2\alpha}+\|Z^N(r)\|^{2\alpha})dr\Big)^{\frac q2}ds\Big].
\end{align*}
Since $\alpha<1$, one can choose a positive number $l>2$ and a  large enough number $p$ 
 such that
$2\alpha l<4$ and $(2\alpha_1+\frac 14)\frac l{l-1}<1$.
Then by using a priori estimates \eqref{pri-zn} and \eqref{pri-h-1}, we obtain 
\begin{align*}
\E\Big[\|Y_{\alpha,N}\|_{L^p(0,T; H)}^q\Big]
&\le C(T,q,\alpha)
\int_0^T(\int_0^s(s-r)^{-(2\alpha_1+\frac 14)\frac l{l-1}}dr)^{\frac {q(l-1)}{2l}}\\
&\qquad \times
\E\Big[\Big(\int_0^s(1+\|Y^N(r)\|^{2\alpha l}+\|Z^N(r)\|^{2\alpha l})dr\Big)^{\frac q{2l}}\Big]ds\\
&\le  C(T,q,\alpha,X_0), 
\end{align*}
which implies that 
\begin{align*}
\E\Big[\sup_{s\in [0,T]}\|Z^N(s)\|_{E}^q\Big]
&\le C(T,q,\alpha,X_0).
\end{align*}
\end{proof}

\begin{cor}
Let $X_0\in \HH$, $T>0$ and $q\ge1$.
Then the solution  $X^N$  of Eq. \eqref{sga}  satisfies
\begin{align}\label{p-mom-h-1}
\E\Big[\sup_{t\in [0,T]} \big\|X^N(t)\big\|_{\HH^{-1}}^q\Big]
&\le C(X_0,T,q),
\end{align}
where $ C(X_0,T,q)$ is a positive constant.
\end{cor}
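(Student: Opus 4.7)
The plan is to leverage the decomposition $X^N = Y^N + Z^N$ introduced in \eqref{rand-y}--\eqref{sto-con} and bound each piece separately. For $Z^N$, I would invoke Lemma~\ref{cor-zn} directly: since $\mathcal O$ is bounded and the eigenvalues $\lambda_j$ of $A$ are bounded below by $\lambda_1>0$, one has the continuous embeddings $E\hookrightarrow \HH\hookrightarrow \HH^{-1}$, so
\begin{align*}
\E\Big[\sup_{t\in[0,T]}\|Z^N(t)\|_{\HH^{-1}}^q\Big]
\;\le\; C\,\E\Big[\sup_{t\in[0,T]}\|Z^N(t)\|_{E}^q\Big]
\;\le\; C(X_0,T,q).
\end{align*}

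The key observation for $Y^N$ is that the energy identity obtained at the beginning of the proof of Lemma~\ref{lm-xn} is genuinely \emph{pathwise}: since $Y^N$ solves the random PDE \eqref{rand-y} with no It\^o integral on the right-hand side, the computation yields, for every $\omega$ and every $t\in[0,T]$,
\begin{align*}
\|Y^N(t)\|_{\HH^{-1}}^2
+ 2\int_0^t\|\nabla Y^N(s)\|^2ds
+ 8(c_4-\epsilon)\int_0^t\|Y^N(s)\|_{L^4}^4 ds
\;\le\; \|P^N X_0\|_{\HH^{-1}}^2 + C(\epsilon)\int_0^t\!\!\bigl(1+\|Z^N(s)\|_{L^4}^4\bigr)ds.
\end{align*}
Because the right-hand side is non-decreasing in $t$, I can take the supremum over $t\in[0,T]$ on the left without paying anything (no Burkholder argument is needed here), and then pass to the $q/2$-th moment to obtain
\begin{align*}
\E\Big[\sup_{t\in[0,T]}\|Y^N(t)\|_{\HH^{-1}}^q\Big]
\;\le\; C\,\|X_0\|_{\HH^{-1}}^q + C(\epsilon,T,q)\,\E\Big[\sup_{s\in[0,T]}\|Z^N(s)\|_{L^4}^{2q}\Big] + C(\epsilon,T,q).
\end{align*}

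The last expectation is controlled by $\E[\sup_{s\in[0,T]}\|Z^N(s)\|_{E}^{2q}]$ via the embedding $E\hookrightarrow L^4$, and is therefore finite by Lemma~\ref{cor-zn} applied with exponent $2q$. Combining with the bound on $Z^N$ and using the triangle inequality $\|X^N\|_{\HH^{-1}}\le \|Y^N\|_{\HH^{-1}}+\|Z^N\|_{\HH^{-1}}$ finishes the proof. I do not foresee a real obstacle here; the only subtlety worth emphasizing is that the stochastic part has already been absorbed into $Z^N$, so the supremum for $Y^N$ costs nothing and one does not need a new stochastic maximal inequality beyond the one already used to prove Lemma~\ref{cor-zn}.
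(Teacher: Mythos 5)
Your proposal is correct and follows essentially the same route as the paper: the paper likewise notes that the pathwise energy estimate \eqref{pri-h1} for the random PDE \eqref{rand-y} upgrades for free to a supremum-in-time bound on $\|Y^N\|_{\HH^{-1}}$ (its right-hand side being nondecreasing in $t$), and then combines this with Lemma~\ref{cor-zn} for $Z^N$. The only cosmetic difference is that you control the $\int_0^T\|Z^N(s)\|_{L^4}^4\,ds$ term via $\sup_s\|Z^N(s)\|_E$ from Lemma~\ref{cor-zn} rather than via the pointwise moment bound \eqref{pri-zn}, which changes nothing of substance.
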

\begin{proof}
Similar arguments in the proof of \eqref{pri-h-1} yield that 
for any $k\ge1$,
$$\E\Big[\sup_{t\in[0,T]}\|Y^N(t)\|_{\HH^{-1}}^{2k}\Big]\le C(X_0,T,k).$$ Combining this estimate with Lemma \ref{cor-zn}, we complete the proof.
\end{proof}
Thanks to the above a priori estimates of $Y^N$ and $Z^N$, we are now in a position to deduce the a priori estimate of $X^N$ in $\HH$.  
\begin{lm}\label{pri-xn}
Let $X_0\in \HH$, $T>0$ and $q\ge1$.
There exists  a positive constant $ C(X_0,T,q)$ such that 
\begin{align}\label{p-mom-xn}
\E\Big[\sup_{t\in [0,T]} \big\|X^N(t)\big\|_{\HH}^q\Big]
&\le C(X_0,T,q).
\end{align}
\end{lm}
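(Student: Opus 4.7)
Since $X^N = Y^N + Z^N$ and Lemma \ref{cor-zn} controls $\E[\sup_{t\in[0,T]}\|Z^N(t)\|_E^q]$, the plan is to reduce to proving $\E[\sup_{t\in[0,T]}\|Y^N(t)\|_{\HH}^q]\le C(X_0,T,q)$. Because $Y^N$ satisfies the pathwise random PDE \eqref{rand-y} (no stochastic integral), I apply a deterministic energy estimate $\omega$-by-$\omega$. Testing \eqref{rand-y} against $Y^N$ in $\HH$ yields
\begin{equation*}
\tfrac{d}{dt}\|Y^N\|^2 + 2\|AY^N\|^2 = -2\langle AY^N, F(Y^N+Z^N)\rangle.
\end{equation*}
The obstacle is handling the right-hand side: a naive Cauchy--Schwarz/Young bound gives $\|F(Y^N+Z^N)\|^2\le C(1+\|Y^N\|_{L^6}^6)$, and the sharp 1D Gagliardo--Nirenberg estimate $\|Y^N\|_{L^6}^6\le C\|Y^N\|^4\|\nabla Y^N\|^2$ leads to a quadratic Gronwall inequality $\phi(t)\le C+C\int_0^t\|\nabla Y^N(s)\|^2\phi(s)^2\,ds$ that one cannot close uniformly in $\omega$ from the integrated moment bounds \eqref{pri-h-1} alone.

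To bypass this, my strategy is to split $F(Y^N+Z^N)=F(Y^N)+[F(Y^N+Z^N)-F(Y^N)]$ and treat the two pieces separately. For $-2\langle AY^N,F(Y^N)\rangle$, integration by parts (using $Y^N|_{\partial\OO}=0$) produces $-2\int F'(Y^N)|\nabla Y^N|^2 dx$ plus a boundary contribution; since $F'(\xi)=12c_4\xi^2+6c_3\xi+2c_2$ is a quadratic with positive leading coefficient, hence bounded below by some $-K_0$, the volume term is at most $2K_0\|\nabla Y^N\|^2$, and the boundary term (proportional to $c_1\,\partial_n Y^N$ at the endpoints) is tamed in 1D by the Agmon inequality $\|\nabla Y^N\|_{L^\infty}^2\le C\|\nabla Y^N\|\|AY^N\|$ followed by Young, giving a contribution of $\epsilon\|AY^N\|^2+C_\epsilon(1+\|\nabla Y^N\|^2)$. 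For the perturbation, the mean value theorem and $|F'(\xi)|\le C(1+\xi^2)$ yield
\begin{equation*}
\|F(Y^N+Z^N)-F(Y^N)\|^2\le C\int(1+|Y^N|^4+|Z^N|^4)|Z^N|^2 dx\le C(1+\|Z^N\|_E^6)+C\|Z^N\|_E^2\|Y^N\|_{L^4}^4,
\end{equation*}
so a further Cauchy--Schwarz/Young step bounds $|-2\langle AY^N,F(Y^N+Z^N)-F(Y^N)\rangle|$ by $\epsilon\|AY^N\|^2+C(1+\|Z^N\|_E^6)+C\|Z^N\|_E^2\|Y^N\|_{L^4}^4$.

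Combining the two estimates, choosing $\epsilon$ small enough to absorb the $\|AY^N\|^2$ terms into the left-hand side, and integrating in time,
\begin{equation*}
\sup_{s\in[0,t]}\|Y^N(s)\|^2\le C(X_0,T)+C\int_0^T\|\nabla Y^N\|^2 ds+C\sup_{s\le T}\|Z^N(s)\|_E^6+\sup_{s\le T}\|Z^N(s)\|_E^2\int_0^T\|Y^N\|_{L^4}^4 ds.
\end{equation*}
Taking $q$-th moments and applying Cauchy--Schwarz to the last product, each term on the right is finite by \eqref{pri-h-1} (for $\int\|\nabla Y^N\|^2$ and $\int\|Y^N\|_{L^4}^4$) and \eqref{pri-sup-zn} (for $\sup\|Z^N\|_E$), giving $\E[\sup_t\|Y^N\|^q]\le C(X_0,T,q)$. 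Since $\|X^N(t)\|\le\|Y^N(t)\|+\sqrt{L}\,\|Z^N(t)\|_E$, combining with Lemma \ref{cor-zn} yields \eqref{p-mom-xn}.
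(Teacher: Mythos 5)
Your proposal is correct and follows essentially the same route as the paper: an $\HH$-energy estimate for $Y^N$ from the random PDE \eqref{rand-y}, using integration by parts and the lower bound on $F'$ (the ``dissipativity of $-F$'') to arrive at a bound of the form $\|Y^N(t)\|^2+(2-\epsilon)\int_0^t\|AY^N\|^2ds\le C+C\int_0^t(1+\|Z^N\|_E^2)\|Y^N\|_{L^4}^4ds+C\int_0^t(1+\|\nabla Y^N\|^2+\|Z^N\|_E$-terms$)ds$, then closing by taking moments, applying H\"older to the product term, and invoking \eqref{pri-h-1} and \eqref{pri-sup-zn}. The only difference is that you spell out the splitting $F(Y^N+Z^N)=F(Y^N)+[F(Y^N+Z^N)-F(Y^N)]$ and the Dirichlet boundary contribution explicitly, details the paper compresses into one line.
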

\begin{proof}
By applying the integration by parts and the dissipativity  of $-F$,
we obtain 
\begin{align*}
&\|Y^N(t)\|^2+(2-\epsilon)\int_0^t\|(-A) Y^N(s)\|^2ds\\
&\le \|X_0^N\|^2
+C\int_0^t(\|Z^N(s)\|_E^2+1)\|Y^N(s)\|_{L^4}^4ds\\
&\quad+
C\int_0^t
(1+\|\nabla Y^N(s)\|^2+\|Z^N(s)\|_{L^8}^8)ds.
\end{align*}
Taking the $p$th moment and using the a priori estimates \eqref{pri-h-1} and \eqref{pri-sup-zn}, we have that for $p\ge1$,
\begin{align*}
&\E\Big[\sup_{t\in[0,T]}\|Y^N(t)\|^{2p}\Big]+\E\Big[\int_0^T\|(-A)Y^N(s)\|^{2p}ds\Big]
\\
&\le C(p,T)\Big(\E\Big[\|X_0^N\|^{2p}\Big]
+
\E\Big[(1+\sup_{s\in[0,T]}\|Z^N(s)\|_E^{2p})(\int_0^T\|Y^N(s)\|^{4}_{L^4}ds)^p\Big]\\
&\quad+
\E \Big[\Big(\int_0^T
(1+\|\nabla Y^N(s)\|^2+\|Z^N(s)\|_{L^8}^8)ds\Big)^p\Big]
\Big)\\
&\le  C(T,X^N_0,p),
\end{align*}
which, together with \eqref{pri-sup-zn} and the H\"older inequality, completes the proof.

\end{proof}

Based on the a priori estimate of $\|X^N\|$, we are in a position to deduce the regularity estimate of $X^N$. Before that, we first give the regularity estimate of $Z^N$.

\begin{lm}\label{reg-zn}
Let $X_0\in \HH$, $q\ge 1$ and $\gamma\in (0,\frac 32)$.
Then the discrete stochastic convolution $Z^N$ satisfies 
\begin{align}\label{reg-z}
\E\Big[\sup_{t\in [0,T]}\big\|Z^N(t)\big\|_{\HH^{\gamma}}^q\Big]
&\le C(X_0,T,q)
\end{align}
for a positive constant $ C(X_0,T,q)$.
\end{lm}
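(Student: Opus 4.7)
The plan is to generalize the factorization-method argument from Lemma \ref{cor-zn}, replacing the target norm $\|\cdot\|_E$ by $\|\cdot\|_{\HH^\gamma}$ and using the smoothing effect \eqref{smo-eff} of the semigroup $S(t)=e^{-A^2 t}$ to absorb the fractional power $A^{\gamma/2}$.

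First, for $p>1$ large and $\alpha_1 \in (\tfrac 1p + \tfrac\gamma 4,\tfrac 38)$ (to be chosen), I would write the factorization
\begin{align*}
Z^N(t) = c_{\alpha_1}\int_0^t (t-s)^{\alpha_1-1} S(t-s)\, Y_{\alpha_1,N}(s)\,ds,
\end{align*}
with $Y_{\alpha_1,N}(s)=\int_0^s (s-r)^{-\alpha_1} S(s-r) P^N G(Y^N(r)+Z^N(r))\,dW(r)$. Applying $A^{\gamma/2}$ inside the integral and invoking \eqref{smo-eff} yields $\|A^{\gamma/2}S(t-s)Y_{\alpha_1,N}(s)\|\le C(t-s)^{-\gamma/4}\|Y_{\alpha_1,N}(s)\|$, and a H\"older inequality in $s$ with conjugate exponents $p, p'$ gives
\begin{align*}
\sup_{t\in[0,T]}\|Z^N(t)\|_{\HH^\gamma} \le C(T,p,\gamma,\alpha_1)\,\|Y_{\alpha_1,N}\|_{L^p(0,T;\HH)},
\end{align*}
provided $(\alpha_1-1-\gamma/4)p'>-1$, i.e.\ $\alpha_1>1/p+\gamma/4$.

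Next, I would estimate the $L^q(\Omega)$-moment of $\|Y_{\alpha_1,N}\|_{L^p(0,T;\HH)}$ following the calculations in Lemma \ref{cor-zn}. The H\"older inequality and the Burkholder inequality give, for $q\ge\max(p,2)$,
\begin{align*}
\E\Big[\|Y_{\alpha_1,N}\|_{L^p(0,T;\HH)}^q\Big]\le C(T,q)\int_0^T\E\Big[\Big(\int_0^s(s-r)^{-2\alpha_1}\sum_{i}\|S(s-r)P^N G(Y^N+Z^N)e_i\|^2\,dr\Big)^{q/2}\Big]\,ds.
\end{align*}
Expanding $S(s-r)e_j$ in the eigenbasis, using Parseval together with $\sup_j\|e_j\|_E<\infty$ and the sublinear growth of $G$, then summing via $\sum_j e^{-\lambda_j^2 t}\le Ct^{-1/4}$ (since $\lambda_j\sim j^2$), one gets
\begin{align*}
\sum_{i}\|S(s-r)P^N G(Y^N(r)+Z^N(r))e_i\|^2\le C(s-r)^{-1/4}\bigl(1+\|Y^N(r)\|^{2\alpha}+\|Z^N(r)\|^{2\alpha}\bigr).
\end{align*}
Provided $2\alpha_1+1/4<1$, i.e.\ $\alpha_1<3/8$, the kernel $(s-r)^{-2\alpha_1-1/4}$ is integrable on $(0,s)$, and the finite-moment bound $\E[\sup_{t\in[0,T]}\|X^N(t)\|^q]<\infty$ from Lemma \ref{pri-xn} (together with \eqref{pri-sup-zn}) closes the estimate independently of $N$.

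The main obstacle is making the admissible window $\alpha_1\in(1/p+\gamma/4,\ 3/8)$ non-empty, which amounts to the constraint $\gamma<3/2-4/p$. For any $\gamma\in(0,3/2)$ this is achievable by first choosing $p$ large enough and then selecting $\alpha_1$ in the resulting interval; the constants in the final bound depend on $p$, $\alpha_1$, $q$, $T$, $X_0$ but not on $N$. The threshold $3/2$ is structural, reflecting the regularity $\|A^{(\gamma-2)/2}Q^{1/2}\|_{\LL_2(\HH)}<\infty$ for $\gamma<3/2$ imposed in Section \ref{sec-2} on the cylindrical noise driving the biharmonic heat equation.
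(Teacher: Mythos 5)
Your proposal is correct and follows essentially the same route as the paper: the factorization method with $\beta=\gamma/4$, the Burkholder inequality, the kernel bound $\sum_j e^{-\lambda_j^2 t}\le Ct^{-1/4}$ giving the singularity $(s-r)^{-(2\alpha_1+1/4)}$, and the a priori moment bounds on $Y^N$ and $Z^N$ to close the estimate uniformly in $N$. The paper's admissibility condition $\frac 2p+\frac\gamma 2+\frac 14<1$ together with $2\alpha_1+\frac 14<1$ is exactly your window $\alpha_1\in(\frac 1p+\frac\gamma 4,\frac 38)$, which you have simply made more explicit.
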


\begin{proof}
By the factorization method, we have 
for $\alpha_1>\frac 1p+\beta$, $p>1$, $\beta=\frac \gamma 4$,
\begin{align*}
\E\Big[\sup_{s\in [0,T]}\|Z^N(s)\|_{\HH^{\gamma}}^q\Big]
&\le C(T,q)\E\Big[\|Y_{\alpha_1,N}\|_{L^p(0,T; \HH)}^q\Big],
\end{align*}
where $Y_{\alpha_1,N}(s)=\int_0^s(s-r)^{-\alpha_1}S(s-r)
P^NG(Y^N(r)+Z^N(r))dW(r)$.
From the H\"older and Burkholder inequalities, the estimates  \eqref{pri-h-1} and  \eqref{p-mom-xn},  it follows that for $q\ge \max(p,2)$,
\begin{align*}
&\E\Big[\|Y_{\alpha_1,N}\|_{L^p(0,T; \HH)}^q\Big]\\
&\le C(T,q)\int_0^T\E\Big[ \Big(\int_0^s(s-r)^{-2\alpha_1}\sum_{i\in \N^+}\Big\|S(s-r)
P^NG(Y^N(r)+Z^N(r))e_i\Big\|^2dr\Big)^{\frac q2}ds\Big]\\
&\le C(T,q)\int_0^T\E\Big[ \Big(\int_0^s(s-r)^{-(2\alpha_1+\frac 14)}(1+\|Y^N(r)\|^{2\alpha}+\|Z^N(r)\|^{2\alpha})dr\Big)^{\frac q2}ds\Big]\\
&\le C(T,q)\Big(1+\E\Big[ \sup_{r\in [0,T]}\|Y^N(r)\|^{2\alpha q}\Big]+\E\Big[\sup_{r\in [0,T]}\|Z^N(r)\|^{2\alpha q}\Big]\Big)\\
&\quad \times \int_0^T(\int_0^s(s-r)^{-(2\alpha_1+\frac 14)}dr)^{\frac q2}ds\\
&\le C(X_0,T,q,\alpha)
\int_0^T(\int_0^s(s-r)^{-(2\alpha_1+\frac 14)}dr)^{\frac q2}ds.
\end{align*}
Since $\gamma <\frac 32$, one can choose a positive  a  large enough number $p$ 
 such that
$\frac 2p+\frac \gamma 2+\frac 14<1$ and $2\alpha_1+\frac 14<1$.
Thus we obtain 
\begin{align*}
\E\Big[\|Y_{\alpha_1,N}\|_{L^p(0,T; H)}^q\Big]
\le C(X_0,T,q,\gamma),
\end{align*}
which implies that for any $q\in \N^+$, 
\begin{align*}
\E\Big[\sup_{s\in [0,T]}\|Z^N(s)\|_{\HH^{\gamma}}^q\Big]
\le
C(X_0,T,q,\gamma). 
\end{align*}
\end{proof}
Next, we deduce the following uniform regularity estimate of 
$X^N$.
\begin{prop}\label{prop-spa}
Let $ X_0\in \HH^{\gamma}$,  $\gamma \in [1,\frac 32)$, $T>0$, $q\ge1$ and $N\in \N^+$.
Then the unique mild solution  $X^N$  of Eq. \eqref{sga}  satisfies 
\begin{align}\label{reg-x}
\E\Big[\sup_{t\in [0,T]}\big\|X^N(t)\big\|_{\HH^{\gamma}}^q\Big]
&\le C(X_0,T,q)
\end{align}
for a positive constant $C(X_0,T,q)$.
\end{prop}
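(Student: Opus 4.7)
The strategy is to use once more the decomposition $X^N=Y^N+Z^N$. By Lemma \ref{reg-zn}, the stochastic convolution already satisfies $\E\sup_{t\in[0,T]}\|Z^N(t)\|_{\HH^\gamma}^q\le C(X_0,T,q)$ for every $\gamma\in(0,3/2)$, so it suffices to establish the same bound for $Y^N$ and add the two.

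For $Y^N$, I would work from its mild form
\[
Y^N(t)=S(t)P^NX_0-\int_0^t AS(t-s)P^NF(X^N(s))\,ds.
\]
Since $P^N$ commutes with $A$ and $S(t)$ is contractive on $\HH^\gamma$, the deterministic term is immediately bounded by $\|X_0\|_{\HH^\gamma}$. For the Duhamel integral, the key tool is the smoothing effect \eqref{smo-eff}, which gives
\[
\|A^{1+\gamma/2}S(t-s)P^NF(X^N(s))\|\le C(t-s)^{-(\gamma+2)/4}\|F(X^N(s))\|.
\]
The singularity is integrable precisely when $\gamma<2$, which comfortably covers the range $\gamma\in[1,3/2)$; the worst case $\gamma$ close to $3/2$ yields an exponent close to $7/8$.

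The problem is thereby reduced to obtaining high-moment $L^p(0,T;H)$ bounds for $F(X^N(\cdot))$. Since $f$ is a polynomial of degree four, $|F(\xi)|\le C(1+|\xi|^3)$, so $\|F(X^N)\|^2\le C(1+\|X^N\|_{L^6}^6)$. In one dimension the Gagliardo--Nirenberg inequality gives $\|v\|_{L^6}^6\le C\|v\|^4\|\nabla v\|^2$, and $\|\nabla X^N\|\le \|\nabla Y^N\|+\|\nabla Z^N\|$ with $\|\nabla Y^N\|\le C\|Y^N\|^{1/2}\|AY^N\|^{1/2}$ by interpolation. Combining Lemma \ref{pri-xn} (for $\sup_t\|X^N\|$ and $\int_0^T\|AY^N\|^{2p}$) with Lemma \ref{reg-zn} applied at $\gamma=1$ (for $\sup_t\|Z^N\|_{\HH^1}$) then yields $\E\bigl(\int_0^T\|F(X^N(s))\|^p ds\bigr)^{q/p}\le C(X_0,T,p,q)$ for every $p,q\ge 1$.

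The remainder is bookkeeping: choose $p$ large enough so that $p'(\gamma+2)/4<1$ (i.e.\ $p>4/(2-\gamma)$), apply H\"older in time to the Duhamel integral, take the supremum over $t\in[0,T]$ and then the $q$-th moment, and finally add the bound from Lemma \ref{reg-zn} for $Z^N$. The main obstacle is the tension between the near-critical semigroup singularity when $\gamma$ is close to $3/2$, which forces $p$ to be large, and the cubic nonlinearity, which in turn requires arbitrarily high moments of $\|X^N\|_{\HH^1}$; however, these moments are already supplied by the earlier a priori estimates, so the argument closes.
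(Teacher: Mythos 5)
Your overall architecture (decompose $X^N=Y^N+Z^N$, dispose of $Z^N$ via Lemma \ref{reg-zn}, treat $Y^N$ through its mild form and the smoothing estimate \eqref{smo-eff}, and feed in the earlier a priori bounds through Gagliardo--Nirenberg) is the same as the paper's, but the step where you claim
$\E\big(\int_0^T\|F(X^N(s))\|^p\,ds\big)^{q/p}\le C$ \emph{for every} $p\ge1$ has a genuine gap, and it is exactly the step that matters on the range $\gamma\in[1,\tfrac32)$. Your chain of inequalities gives
\begin{align*}
\|F(X^N)\|\lesssim 1+\|X^N\|_{L^6}^3\lesssim 1+\|X^N\|^2\big(\|Y^N\|^{1/2}\|AY^N\|^{1/2}+\|\nabla Z^N\|\big),
\end{align*}
and the only time-integrability of $\|AY^N\|$ supplied by Lemma \ref{pri-xn} (and \eqref{pri-h-1}) is $L^2(0,T)$ -- the energy estimate yields high moments of $\big(\int_0^T\|AY^N\|^2ds\big)$, not of $\int_0^T\|AY^N\|^{p}ds$ for $p>2$. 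Consequently $\|\nabla Y^N\|\in L^4(0,T)$ and $F(X^N)\in L^4(0,T;H)$ at best along your route, so the H\"older-in-time step forces $p\le 4$, while your own admissibility condition demands $p>4/(2-\gamma)$. These are compatible only when $\gamma<1$; for $\gamma\in[1,\tfrac32)$ the singularity $(t-s)^{-(2+\gamma)/4}$ raised to the conjugate exponent $4/3$ becomes $(t-s)^{-(2+\gamma)/3}$, which is non-integrable, and the argument does not close.

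The paper avoids this by a two-step bootstrap that your proposal skips. First it proves $\E\big[\sup_{t\in[0,T]}\|Y^N(t)\|_{L^6}^q\big]\le C$ directly from the mild form: mapping $F(X^N)\in H$ into $L^6$ costs only an extra $(t-s)^{-1/12}$, so the total singularity is $(t-s)^{-7/12}$, and after writing $\|Y^N\|_{L^6}^3\le\|AY^N\|^{1/2}\|Y^N\|^{5/2}$ one can apply H\"older with exponents $(4,4/3)$: the factor $\|AY^N\|^{1/2}$ is raised to the fourth power and matched against the available $\int_0^T\|AY^N\|^2ds$, while $(t-s)^{-7/12\cdot 4/3}=(t-s)^{-7/9}$ remains integrable. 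Only then does the paper run the $\HH^\gamma$ estimate; with $\sup_s\|Y^N(s)\|_{L^6}$ and $\sup_s\|Z^N(s)\|_{L^6}$ already under control, the term $\int_0^t(t-s)^{-1/2-\gamma/4}(1+\|Y^N\|_{L^6}^3+\|Z^N\|_{L^6}^3)ds$ needs no H\"older in time at all, and the singularity is integrable for all $\gamma<2$. To repair your proof you should insert this intermediate uniform $L^6$ bound (or an equivalent sup-in-time control of $\|F(X^N)\|$) rather than relying on $L^p(0,T)$ bounds with large $p$.
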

\begin{proof}
Due to \eqref{reg-z},
it suffices to give the regularity estimate for $Y^N$.
Before that, we give the following estimate of $\|Y^N(t)\|_{L^6}$.
The Sobolev embedding theorem, the contractivity \eqref{con-pro} from $L^6$ to $L^2$, the smoothing effect \eqref{smo-eff}, and the Gagliardo--Nirenberg inequality yield that 
\begin{align*}
&\|Y^N(t)\|_{L^6}\\
&\le \|S(t)X_0^N\|_{L^6}
+\int_0^t \|S(\frac {t-s} 2)(S(\frac {t-s} 2)A)P^NF(Y^N(s)+Z^N(s))\|_{L^6}ds\\
&\le  C\|X_0^N\|_{L^6}
+C\int_0^t(t-s)^{-\frac 12}\|S(\frac {t-s} 2)A\| \|F(Y^N(s)+Z^N(s))\|ds\\
&\le C\|X_0^N\|_{L^6}
+C\int_0^t(t-s)^{-\frac 7{12}}
\Big(1+\|Z^N(s)\|_{L^{6}}^{3}+\| Y^N(s)\|_{L^{6}}^{3}\Big)ds\\
&\le C\|X_0^N\|_{\HH^1}
+C\int_0^t(t-s)^{-\frac 7{12}}
\Big(1+\|Z^N(s)\|_{L^{6}}^{3}+
\|AY^N(s)\|^{\frac 12}\|Y^N(s)\|^{\frac 52}\Big)ds.
\end{align*}
From the H\"older inequality, the estimates \eqref{pri-h-1}, \eqref{p-mom-xn} and \eqref{reg-z}, 
it follows that for any $q\ge 1$,
\begin{align*}
\E\Big[\sup_{t\in [0,T]}\|Y^N(t)\|_{L^6}^q\Big]
&\le C(p)\|X_0^N\|_{\HH^1}^q
+C(p)\E\Big[\Big(\int_0^T\|AY^N(s)\|^2ds\Big)^{\frac q4}\Big]
\\
&\quad+C(p)(\int_0^T(t-s)^{-\frac 7{9}}ds)^{\frac {3q}4}\E\Big[\sup_{s\in[0,T]}\|Y^N(s)\|^{\frac {10q}3}\Big]\\
&\quad+C(p)(\int_0^T(t-s)^{-\frac 7{12}}ds)^q
\Big(1+\E\Big[\sup_{s\in [0,T]}\|Z^N(s)\|_E^{3q}\Big]\Big)\\
&\le C(X_0,T,q).
\end{align*}
The mild form of $Y^N(t)$ and  \eqref{smo-eff} lead to 
\begin{align*}
\|Y^N(t)\|_{\mathbb \HH^{\gamma}}
&\le \|e^{-A^2t}X_0^N\|_{\HH^{\gamma}}
+\int_0^t\big\|e^{-A^2(t-s)}AF(Y^N(s)+Z^N(s))\big\|_{\HH^{\gamma}}ds\\
&\le C\|X_0^N\|_{\HH^{\gamma}}
+C\int_0^t(t-s)^{-\frac 12}\big\|e^{-\frac 12A^2(t-s)}F(Y^N(s)+Z^N(s))\big\|_{\HH^{\gamma}}ds\\
&\le C\|X_0\|_{\HH^{\gamma}}
+C\int_0^t(t-s)^{-\frac 12-\frac {\gamma}4}\big(1+\|Y^N(s)\|_{L^6}^3+\|Z^N(s)\|_{L^6}^3\big)ds.
\end{align*}
By taking $q$th moment and making use of the a priori estimates of  $\|Y^N\|_{L^6}$ and $\|Z^N\|_{\HH^{\gamma}}$, we finish the proof. 
\end{proof}

\begin{rk}
If $X_0 \in \HH^{\gamma}$, $\gamma\in (0,1)$, the
estimate \eqref{reg-x} also holds for  $\gamma\in (0,1)$. The key ingredient of the proof 
is use of the contractivity of $S(t)$ to deal with the term $\|S(t)X_0^N\|_{L^6}$. Indeed, \eqref{con-pro}  yields that  
\begin{align*}
\|S(t)X_0^N\|_{L^6}\le Ct^{-\frac 1{12}}\|X_0\|.
\end{align*}
From the  H\"older inequality,  it follows that there exist $p_1,q_1$ satisfying $\frac 1{p_1}+\frac 1{q_1}=1$, $(\frac {1}2+\frac {\gamma}4)p_1<1$ and $q_1< 4$, such that 
\begin{align*}
&\int_0^t(t-s)^{-\frac 12-\frac {\gamma}4}\|Y^N(s)\|_{L^6}^3ds
\le 
(\int_0^t(t-s)^{-(\frac {1}2+\frac {\gamma}4)p_1}ds)^{\frac 1{p_1}}(\int_0^t \|Y^N(s)\|_{L^6}^{3q_1}ds)^{\frac 1{q_1}}.
\end{align*}
Based on the above estimate and similar arguments in the proof of Proposition \ref{prop-spa}, we obtain the desired result.
\end{rk}

After these preparations, we are able to answer the well-posedness problem of Eq. \eqref{spde}. Before that, we give the useful lemma whose proof is similar to that of \cite[Lemma 4.3]{CHS19}.

\begin{lm}\label{sob}
Let $g: L^4\to H$ be the Nemytskii operator of a polynomial of second degree.  
Then for any $\beta\in (0,1)$, it holds that 
\begin{align*}
\|g(x)y\|_{\HH^{-1}}\le C\big(1+\|x\|_E^2+\|x\|_{\HH^{\beta}}^2\big)\|y\|_{\HH^{-\beta}},
\end{align*}
where $ x\in E,  x\in \HH^{\beta}$ and $y\in \HH.$
\end{lm}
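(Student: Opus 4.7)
The plan is to establish the bound by two successive duality pairings, reducing the negative-order product estimate to a positive-order one. Since $\HH^{-1}$ is the dual of $\HH^1$ and multiplication by the real-valued function $g(x)$ is formally self-adjoint,
\[
\|g(x)y\|_{\HH^{-1}} = \sup_{\|z\|_{\HH^1}\le 1}\langle g(x)y, z\rangle = \sup_{\|z\|_{\HH^1}\le 1}\langle y, g(x)z\rangle \le \|y\|_{\HH^{-\beta}}\sup_{\|z\|_{\HH^1}\le 1}\|g(x)z\|_{\HH^{\beta}},
\]
the last step using the $\HH^{-\beta}$--$\HH^{\beta}$ pairing. The task thus reduces to proving the positive-order estimate $\|g(x)z\|_{\HH^{\beta}} \le C\bigl(1+\|x\|_E^2+\|x\|_{\HH^{\beta}}^2\bigr)\|z\|_{\HH^1}$.

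For this I would invoke a fractional Leibniz rule in $\HH^{\beta}=D(A^{\beta/2})$, which for $\beta\in(0,1)$ is comparable (up to boundary conditions) to a classical fractional Sobolev space on $\OO$:
\[
\|uv\|_{\HH^{\beta}} \le C\bigl(\|u\|_E\|v\|_{\HH^{\beta}} + \|u\|_{\HH^{\beta}}\|v\|_E\bigr).
\]
Applied with $u=g(x)$ and $v=z$, the remaining task is to bound $\|g(x)\|_E$ and $\|g(x)\|_{\HH^{\beta}}$. Since $g$ is a polynomial of degree two, $\|g(x)\|_E\le C(1+\|x\|_E^2)$ is immediate, and the same Leibniz estimate applied to $x^2$ yields $\|x^2\|_{\HH^{\beta}}\le C\|x\|_E\|x\|_{\HH^{\beta}}$, so $\|g(x)\|_{\HH^{\beta}}\le C(1+\|x\|_E)(1+\|x\|_{\HH^{\beta}})$. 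Since $\OO$ is one-dimensional, $\HH^1\hookrightarrow E$ and $\HH^1\hookrightarrow \HH^{\beta}$, hence $\|z\|_E+\|z\|_{\HH^{\beta}}\le C\|z\|_{\HH^1}$. Combining these bounds and absorbing the cross term via Young's inequality $\|x\|_E\|x\|_{\HH^{\beta}}\le \tfrac12(\|x\|_E^2+\|x\|_{\HH^{\beta}}^2)$ delivers the claim.

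The delicate step is the fractional Leibniz rule at the level of the domain scale $\HH^{\beta}$. For $\beta\in(0,\tfrac12)$ one has $\HH^{\beta}\cong H^{\beta}$ with equivalent norms and the Leibniz rule is classical. For $\beta\in[\tfrac12,1)$, however, the space $D(A^{\beta/2})$ encodes the homogeneous Dirichlet boundary condition, so one must check that $g(x)z$ still lies in this domain whenever $z$ does. The cleanest route is to work with the spectral representation $\|u\|_{\HH^{\beta}}^2=\sum_j\lambda_j^{\beta}\langle u,e_j\rangle^2$ and interpolate between the trivial $L^2$ bound at $\beta=0$ and an $\HH^1$-level estimate obtained by direct differentiation at $\beta=1$, in the spirit of \cite[Lemma 4.3]{CHS19} to which the proof can ultimately be referred.
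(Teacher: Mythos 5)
The paper does not actually prove this lemma; it only points to \cite[Lemma 4.3]{CHS19}, and your duality reduction followed by a fractional Leibniz rule is precisely the standard argument behind that reference, so your overall strategy is the right one and the quantitative bookkeeping (Young's inequality on the cross term, the one-dimensional embeddings $\HH^1\hookrightarrow E$ and $\HH^1\hookrightarrow\HH^\beta$) closes correctly.

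Two points, however, need more care than you give them. First, for $\beta\ge\tfrac12$ the quantity $\|g(x)\|_{\HH^{\beta}}$ that appears in your Leibniz estimate is generically infinite: $\HH^{\beta}=D(A^{\beta/2})$ encodes the Dirichlet condition, and $g(x)$ has trace $g(0)=c_0\neq 0$ on $\partial\OO$ (the constant function is not in $D(A^{\beta/2})$ for $\beta>\tfrac12$), so the inequality $\|x^2\|_{\HH^\beta}\le C\|x\|_E\|x\|_{\HH^\beta}$ cannot be iterated to $g(x)$ as written. The repair is to state the Leibniz rule with the unconstrained Sobolev--Slobodeckij norm on the factor, $\|uv\|_{H^{\beta}}\le C(\|u\|_E\|v\|_{H^{\beta}}+\|u\|_{H^{\beta}}\|v\|_E)$, use $\|x\|_{H^\beta}\le C\|x\|_{\HH^\beta}$, and recover $g(x)z\in D(A^{\beta/2})$ from the fact that $z\in H^1_0$ forces $g(x)z$ to vanish at the boundary (with the extra Hardy-type weighted term at $\beta=\tfrac12$ controlled by $\|g(x)\|_E^2$). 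You flag the boundary-condition issue for the product $g(x)z$ but not for the factor $g(x)$ itself, which is where it actually bites. Second, your proposed fallback of interpolating between $\beta=0$ and a $\beta=1$ endpoint does not work under the stated hypotheses: differentiating $g(x)z$ (or bounding the operator norm of multiplication by $g(x)$ on $\HH^1$) requires $x\in\HH^1$, whereas the lemma only assumes $x\in\HH^{\beta}$ with $\beta<1$. The direct Gagliardo double-integral estimate for the fixed $\beta$ is what you should rely on; it needs no endpoint and is exactly the elementary computation behind the Leibniz rule you quote.
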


\begin{prop}\label{pri-x-e}
Let $\sup\limits_{N\in \N^+}\|X^N_0\|_E\le C(X_0)$, $T>0$ and $q\ge1$. Then
the unique solution  $X^N$  of Eq. \eqref{sga}  satisfies 
\begin{align}\label{sup-xn}
\E\Big[\sup_{t\in [0,T]}\big\|X^N(t)\big\|_{E}^q\Big]
&\le C(X_0,T,q)
\end{align}
for a positive constant $ C(X_0,T,q)$.
\end{prop}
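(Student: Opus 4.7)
Write $X^N=Y^N+Z^N$. Since Lemma~\ref{cor-zn} already gives $\E[\sup_{t\in[0,T]}\|Z^N(t)\|_E^q]\le C(X_0,T,q)$, the task reduces to establishing the corresponding bound for $Y^N$. Because $F$ is a cubic polynomial, estimating $\|Y^N\|_E$ via the mild form produces the term $\|F(Y^N+Z^N)\|\le C(1+\|Y^N\|_{L^6}^3+\|Z^N\|_{L^6}^3)$, so an intermediate uniform $L^6$ bound on $Y^N$ is required first.

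\textbf{Step 1 (preliminary $L^6$ estimate).} I would repeat the $L^6$ estimate inside the proof of Proposition~\ref{prop-spa} with the \emph{only} change that the initial contribution is controlled via the embedding $E\hookrightarrow L^6$, namely $\|S(t)X_0^N\|_{L^6}\le C\|X_0^N\|_{L^6}\le C\|X_0^N\|_E\le C(X_0)$, rather than through $\|X_0^N\|_{\HH^1}$. The rest of that argument --- the splitting $S(\tau)A=S(\tfrac{\tau}{2})\cdot AS(\tfrac{\tau}{2})$ yielding the integrable singularity $(t-s)^{-7/12}$, the Gagliardo--Nirenberg bound $\|Y^N\|_{L^6}^3\le C\|AY^N\|^{1/2}\|Y^N\|^{5/2}$, and H\"older combined with the uniform estimates $\E[\sup_t\|Y^N\|^{q}]\le C$, $\E[(\int_0^T\|AY^N\|^2\,ds)^q]\le C$ from Lemma~\ref{pri-xn} --- carries through without modification. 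This delivers $\E[\sup_{t\in[0,T]}\|Y^N(t)\|_{L^6}^q]\le C(X_0,T,q)$.

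\textbf{Step 2 (mild estimate in $E$).} The mild form gives
\[
\|Y^N(t)\|_E\le \|S(t)X_0^N\|_E+\int_0^t\|S(t-s)AP^NF(Y^N(s)+Z^N(s))\|_E\,ds.
\]
For the first term I would use the boundedness of $S(t)=e^{-tA^2}$ on the closed subspace of $E$ spanned by $\{e_j\}_{j\in\N^+}$ (which contains every $X_0^N=P^NX_0$) to bound it by $C\|X_0^N\|_E\le C(X_0)$. For the integrand, split $S(\tau)A=S(\tfrac{\tau}{2})\cdot AS(\tfrac{\tau}{2})$ and combine the smoothing effect \eqref{smo-eff} with the contractivity \eqref{con-pro} from $L^2$ into $E$:
\[
\|S(\tau)Av\|_E\le \|S(\tfrac{\tau}{2})\|_{L^2\to E}\,\|AS(\tfrac{\tau}{2})v\|\le C\,\tau^{-1/8}\cdot\tau^{-1/2}\|v\|=C\tau^{-5/8}\|v\|.
\]
Applying this with $v=F(Y^N(s)+Z^N(s))$ and invoking the cubic bound on $F$ yields
\[
\|Y^N(t)\|_E\le C(X_0)+C\int_0^t(t-s)^{-5/8}\bigl(1+\|Y^N(s)\|_{L^6}^3+\|Z^N(s)\|_{L^6}^3\bigr)ds.
\]

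\textbf{Step 3 (closing).} Since $-5/8>-1$ the kernel is integrable. I would take the supremum over $t\in[0,T]$, raise to the $q$-th power, apply H\"older in the $ds$-integral, and then use the uniform bounds on $\sup_t\|Y^N(t)\|_{L^6}$ from Step~1 and on $\sup_t\|Z^N(t)\|_E$ from Lemma~\ref{cor-zn} (the latter implies in particular the $L^6$ bound). Combining with Lemma~\ref{cor-zn} to handle $Z^N$ gives \eqref{sup-xn}. The main obstacle is Step~1: one must verify that the $L^6$ estimate in Proposition~\ref{prop-spa} survives the weaker hypothesis $X_0\in E$ (it does, since all that was needed was $\|X_0^N\|_{L^6}\le C$). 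A secondary point is justifying $\|S(t)X_0^N\|_E\le C\|X_0^N\|_E$, which reduces to boundedness of the biharmonic semigroup on the Dirichlet-type closed subspace of $E$ and is standard.
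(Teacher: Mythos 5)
Your proposal is correct and follows essentially the same route as the paper: decompose $X^N=Y^N+Z^N$, dispose of $Z^N$ via Lemma~\ref{cor-zn}, and bound $Y^N$ in $E$ through its mild form using the splitting $S(\tau)A=S(\tfrac{\tau}{2})AS(\tfrac{\tau}{2})$ with \eqref{con-pro} and \eqref{smo-eff} to get the kernel $(t-s)^{-5/8}$ acting on $\|F(Y^N+Z^N)\|\le C(1+\|Y^N\|_{L^6}^3+\|Z^N\|_{L^6}^3)$, closed by the uniform $L^6$ estimate of $Y^N$. Your Step~1 observation --- that the $L^6$ estimate from Proposition~\ref{prop-spa} must be rerun with $\|S(t)X_0^N\|_{L^6}\le C\|X_0^N\|_E$ replacing the $\HH^1$ bound on the initial datum --- is a point the paper glosses over, and your fix is the right one.
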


\begin{proof}
Due to Corollary \ref{cor-zn}, it remains to bound 
$\E\Big[\sup\limits_{t\in [0,T]}\big\|Y^N(t)\big\|_{E}^q\Big]$.
The mild form of $Y^N$, combined with \eqref{con-pro}, \eqref{smo-eff} and the estimation of $\|Y^N\|_{L^6}$,  yields that 
\begin{align*}
&\E\Big[\sup\limits_{t\in [0,T]}\|Y^N(t)\|_E^q\Big]\\
&\le \E\Big[\sup\limits_{t\in [0,T]}\|S(t)X^N_0\|_E^q\Big]
+C\E\Big[\Big(\int_0^T(t-s)^{-\frac 5{8}}\|F(Y^N+Z^N)\|ds\Big)^{q}\Big]\\
&\le C(X_0,T,q),
\end{align*}
which completes the proof.

\end{proof}

\section{Strong convergence analysis of the spectral Galerkin method}
\label{sec-str}

The main idea of our approach to proving the global existence of the solution is to show the uniform  convergence of the sequence  $\{(Y^N, Z^N)\}_{N \in \N^+}$ and then to prove the limit process is the unique mild solution of Eq. \eqref{spde}. 
In the following, we first present  the strong  convergence analysis of the spectral Galerkin approximation  in $\HH^{-1}$. We would like to mention that there already exists some convergence result of finite dimensional approximation for Eq. \eqref{spde} driving by additive space-time white noise (see e.g. \cite{CHS19}).
Different from the additive case, the convergence analysis of finite dimensional approximation for Eq. \eqref{spde} driving by multiplicative space-time noise is more involved and has not been studied yet.

\begin{prop}\label{str-con}
Let $ X_0\in \HH^{\gamma}$, $\gamma \in (0,\frac 32)$, $T>0$, $p\ge1$ and  $\sup\limits_{N\in \N^+}\|X^N_0\|_E<\infty$.
Assume that $X^N$ and $X^M$ are  the spectral Galerkin approximations with different parameters $N,M\in \N^+, N<M$. Then it holds that 
\begin{align}\label{sup-e-err}
&\sup_{t\in [0,T]}\E\Big[\|X^N(t)-X^M(t)\|_{\HH^{-1}}^{2p}\Big]
\le C(T,X_0,p) \lambda_{N}^{-\gamma p},
\end{align} 
where $C(T,X_0,p)$ is a positive constant.
\end{prop}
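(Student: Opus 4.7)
The plan is to derive an It\^o-based $\HH^{-1}$-energy estimate for the difference $e := X^N - X^M$ and propagate it through a Gr\"onwall argument using the a priori moment bounds from Propositions \ref{prop-spa} and \ref{pri-x-e}. Set $P^{N,M} := P^M - P^N$; since $e\in P^M\HH$ one has $P^N e = e + P^{N,M} X^M$, so subtracting the two copies of \eqref{sga} and using this identity decomposes the nonlinear drift via
\[
\<e, P^N F(X^N) - P^M F(X^M)\>_\HH = \<e, F(X^N)-F(X^M)\>_\HH + \<P^{N,M}X^M, F(X^N)\>_\HH,
\]
into a monotonicity piece and a projection-tail piece, with the noise splitting analogously.

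Two dissipative mechanisms control the non-tail contributions. First, the polynomial identity $a^2+ab+b^2 \ge (a^2+b^2)/2$ yields
\[
\<e, F(X^N)-F(X^M)\>_\HH \ge c_4\int e^2\bigl[(X^N)^2 + (X^M)^2\bigr]dx - C\|e\|^2,
\]
so the leading cubic is genuinely dissipative and only a lower-order remainder $C(1+\|X^N\|_E+\|X^M\|_E)\|e\|^2$ survives. Second, $\|P^N(G(X^N)-G(X^M))\|^2_{\LL_2(\HH,\HH^{-1})} \le C\|e\|^2$ follows from a direct one-dimensional Hilbert--Schmidt computation using $\sum_j\lambda_j^{-1}<\infty$ and $\sup_j\|e_j\|_E<\infty$. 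Both $\|e\|^2$-terms are absorbed via the interpolation $\|e\|^2 \le \|e\|_{\HH^{-1}}\|\nabla e\| \le \epsilon\|\nabla e\|^2 + C(\epsilon)\|e\|^2_{\HH^{-1}}$ into the $\|\nabla e\|^2$-dissipation coming from the bi-Laplacian, at the price of a random Gr\"onwall coefficient $\xi := C(1+\|X^N\|_E^2+\|X^M\|_E^2)$.

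The projection-tail drift $\<P^{N,M}X^M, F(X^N)\>_\HH$ is bounded by $\lambda_N^{-(1+\gamma)/2}\|X^M\|_{\HH^\gamma}\|F(X^N)\|_{\HH^1}$ using the spectral estimate $\|P^{N,M}v\|_{\HH^{-1}}\le \lambda_N^{-(1+\gamma)/2}\|v\|_{\HH^\gamma}$ and the regularity $F(X^N)\in L^q(\Omega;C([0,T];\HH^1))$ which follows from Proposition \ref{prop-spa}. The projection-tail noise is controlled via Burkholder's inequality together with the decomposition $\lambda_j^{-1}\le\lambda_N^{-\gamma}\lambda_j^{\gamma-1}$ and the bound $\sum_j \lambda_j^{\gamma-1}e^{-\tau\lambda_j^2} \lesssim \tau^{-(2\gamma-1)/4}$ (integrable in $\tau$ for $\gamma<5/2$). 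Applying It\^o's formula to $\|e(t)\|^{2p}_{\HH^{-1}}$ and using Young's inequality to separate $\|e\|^{2p-2}_{\HH^{-1}}$ from the tail contributions, then taking expectation, produces a Gr\"onwall-type inequality
\[
\E\|e(t)\|^{2p}_{\HH^{-1}} \le C\lambda_N^{-\gamma p} + C\int_0^t \E\bigl[\xi(s)\|e(s)\|^{2p}_{\HH^{-1}}\bigr] ds.
\]

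The main obstacle is closing this inequality, since $\xi$ has only polynomial (not exponential) moments. The intended resolution is to apply It\^o to the weighted process $V(t) := \|e(t)\|^{2p}_{\HH^{-1}}\exp\bigl(-c\int_0^t\xi(s)ds\bigr)$: with $c$ large enough to dominate the random drift $\xi\|e\|_{\HH^{-1}}^{2p}$, one gets $\E V(t) \le C\lambda_N^{-\gamma p}$ directly. The unweighted estimate is then recovered by a H\"older-type exchange of the integrating factor, leveraging the a priori bound $\E\|e(t)\|_{\HH^{-1}}^q \le C$ from Proposition \ref{pri-xn} and the polynomial moments of $\xi$ from Proposition \ref{pri-x-e}; I expect this exchange (or equivalently a carefully tuned stopping-time truncation) to be the most delicate technical point.
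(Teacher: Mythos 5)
Your overall skeleton (It\^o formula for $\|e\|_{\HH^{-1}}^{2p}$, monotonicity of $-F$, interpolation $\|e\|^2\le\|e\|_{\HH^{-1}}\|\nabla e\|$ to absorb lower-order terms into the bi-Laplacian dissipation, spectral tail bounds, Gr\"onwall) matches the paper's proof. But the step you yourself flag as ``the most delicate technical point'' is a genuine gap, and as proposed it cannot be closed. You arrive at a Gr\"onwall inequality with the \emph{random} coefficient $\xi=C(1+\|X^N\|_E^2+\|X^M\|_E^2)$ and propose to handle it by weighting with $\exp(-c\int_0^t\xi\,ds)$ and then undoing the weight by ``a H\"older-type exchange.'' That exchange requires $\E[\exp(qc\int_0^T\xi\,ds)]<\infty$ for some $q>1$, i.e.\ exponential moments of $\sup_t\|X^N(t)\|_E^2$; the a priori estimates (Proposition \ref{pri-x-e}) only give polynomial moments, and no exponential integrability of $\|X^N\|_E$ is available. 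A stopping-time truncation runs into the same problem when you try to remove the truncation uniformly in $N$.

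The resolution --- and the way the paper proceeds --- is that no random Gr\"onwall coefficient is needed. Since $f''(\xi)=12c_4\xi^2+6c_3\xi+2c_2\ge -C_0$ with a \emph{deterministic} $C_0$, the monotonicity step leaves only $C_0\|e\|^2$ (your own displayed inequality already shows this; the factor $(1+\|X^N\|_E+\|X^M\|_E)$ you append is not there). The random quantities ($\|X^N\|_E$, $\|X^M\|_{\HH^\gamma}$, etc.) then appear only in the projection-tail terms, multiplied by $\|e\|_{\HH^{-1}}^{2p-2}$; Young's inequality $a^{2p-2}b^2\le\epsilon a^{2p}+C(\epsilon)b^{2p}$ decouples them \emph{before} taking expectations, so their $p$-th moments are controlled by Propositions \ref{prop-spa} and \ref{pri-x-e} and the Gr\"onwall coefficient is the deterministic $C(\epsilon)$. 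Two secondary points: (i) the paper bounds the tail drift via Lemma \ref{sob}, i.e.\ $\|A^{-1/2}F'(\cdot)(I-P^N)X^M\|\le C(1+\|\cdot\|_E^2+\|\cdot\|_{\HH^\beta}^2)\|(I-P^N)X^M\|_{\HH^{-\beta}}$ with $\beta\in(0,1)$, which avoids your requirement $F(X^N)\in\HH^1$ (not available from the hypotheses when $\gamma<1$); (ii) your tail-noise estimate invokes a factor $e^{-\tau\lambda_j^2}$ that is not present in the It\^o correction term --- there is no semigroup smoothing in the energy identity --- whereas the Lipschitz bound $\sum_j\lambda_j^{-1}\|(G(X^N)-G(X^M))e_j\|^2\le C\|X^N-X^M\|^2$ (your own ``direct Hilbert--Schmidt computation'') already suffices once one splits $X^N-X^M$ into $X^N-P^NX^M$ (absorbed by interpolation) and $(I-P^N)X^M$ (tail of order $\lambda_N^{-\gamma}$).
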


\begin{proof}
Due to Proposition \ref{prop-spa},
we obtain that for $t\in [0,T]$, $p\ge 1$ and  $\gamma\in (0,\frac 32)$,
\begin{align*}
\E\Big[\|(I-P^N)X^M(t)\|_{\HH^{-1}}^p\Big]
&\le \E\Big[\|(I-P^N)A^{-\frac 12-\frac \gamma 2}A^{\frac \gamma 2}X^M(t)\|^p\Big]\\
&\le C(X_0,T,p,\gamma)\lambda_N^{-\frac p2-\frac {\gamma p}2}.
\end{align*}
Thus it remains to estimate  $\|X^N-P^NX^M\|_{\HH^{-1}}$. 
From the Taylor expansion and It\^o formula, it follows that for $p\ge 2$,
\begin{align*}
&\|X^N(t)-P^NX^M(t)\|_{\HH^{-1}}^{2p}\\
&=-2p\int_0^t\|X^N(s)-P^NX^M(s)\|_{\HH^{-1}}^{2p-2}\|X^N(s)-P^NX^{M}(s)\|_{\HH^1}^2ds\\
&\quad-2p\int_0^t\|X^N(s)-P^NX^M(s)\|_{\HH^{-1}}^{2p-2} \Big\<\int_0^1 F'(\theta X^N(s)+(1-\theta)X^M(s))d\theta\\
&\qquad (X^N(s)-P^NX^M(s)-(I-P^N)X^M(s)),X^N(s)-P^NX^M(s)\Big\>ds\\
&\quad+2p\int_0^t\|X^N(s)-P^NX^M(s)\|_{\HH^{-1}}^{2p-2}\<X^N(s)-P^NX^M(s),\\
&\qquad (G(X^N(s))-G(X^M(s)))dW(s)\>_{\HH^{-1}}\\
&\quad+2p\int_0^t\sum_{i\in \N^+} \|X^N(s)-P^NX^M(s)\|_{\HH^{-1}}^{2p-2}\|P^N((G(X^N(s))-G(X^M(s)))e_i)\|^2_{\HH^{-1}}ds\\
&\quad+2p(2p-2)\int_0^t \|X^N(s)-P^NX^M(s)\|_{\HH^{-1}}^{2p-4}\sum_{i\in \N^+}|\<X^N(s)-P^NX^M(s),\\
&\qquad P^N(G(X^N(s))-G(X^M(s))e_i)\>_{\HH^{-1}}|^2ds\\
&=:-2p\int_0^t\|X^N(s)-P^NX^M(s)\|_{\HH^{-1}}^{2p-2}\|X^N(s)-P^NX^{M}(s)\|_{\HH^1}^2ds\\
&\quad+I_1(t)+I_2(t)+I_3(t)+I_4(t).
\end{align*}
The monotonicity of $-F$ and 
the Young inequality yield that
\begin{align*}
I_1&\le C\int_0^t\|X^N(s)-P^NX^M(s)\|_{\HH^{-1}}^{2p-2}\|X^N(s)-P^NX^M(s)\|^2ds\\
  &\quad+
2\int_0^t\|X^N(s)-P^NX^M(s)\|_{\HH^{-1}}^{2p-2}\Big\<A^{-\frac 12}
\int_0^1 F'(\theta X^N(s)
 +(1-\theta)X^M(s))d\theta\\
  &\qquad 
 (I-P^N)X^M(s)), 
 A^{\frac 12}(X^N(s)-P^NX^M(s))\Big\>ds.
 \end{align*}
From Lemma \ref{sob}, it follows that
for $\beta\in (0,1)$ and small $\epsilon>0$,
 \begin{align*}
 &I_1\le \epsilon \int_0^t\|X^N(s)-P^NX^M(s)\|_{\HH^{-1}}^{2p-2}\|X^N(s)-P^NX^M(s)\|^2_{\HH^1}ds\\
 &\quad
+C(\epsilon)\int_0^t\|X^N(s)-P^NX^M(s)\|_{\HH^{-1}}^{2p}ds\\
&\quad+
C(\epsilon)\int_0^t\Big \|X^N(s)-P^NX^M(s)\|_{\HH^{-1}}^{2p-2}\|A^{-\frac 12}\\
&\qquad \int_0^1 F'(\theta X^N(s)
 +(1-\theta)X^M(s))d\theta
 (I-P^N)X^M(s))\|^2ds\\
 &\le \epsilon \int_0^t\|X^N(s)-P^NX^M(s)\|_{\HH^{-1}}^{2p-2}\|X^N(s)-P^NX^M(s)\|^2_{\HH^1}ds\\
 &\quad
+C(\epsilon)\int_0^t\|X^N(s)-P^NX^M(s)\|_{\HH^{-1}}^{2p}ds\\
&\quad+
C(\epsilon)\lambda_N^{- \gamma  -\beta }\int_0^t\|X^N(s)-P^NX^M(s)\|_{\HH^{-1}}^{2p-2}\Big(1+\|X^N\|_{\HH^{\beta}}^4\\
&\qquad
+\|X^M\|_{\HH^{\beta}}^4+\|X^N\|_E^4+\|X^M\|_E^4\Big) \|X^M(s)\|_{\HH^{ \gamma }}^2ds.
\end{align*}
The uniform boundedness of $\{e_j\}_{j\in \N^+}$
and the Young inequality yield that for small $\epsilon>0$,
\begin{align*}
&\E\Big[I_3+I_4\Big]\\
&\le
C\E\Big[\int_0^t\|X^N(s)-P^NX^M(s)\|_{\HH^{-1}}^{2p-2}\sum_{i\in \N^+} \|P^N((G(X^N(s))-G(X^M(s)))e_i)\|^2_{\HH^{-1}}ds\Big]\\
&\le C\E\Big[\int_0^t \|X^N(s)-P^NX^M(s)\|_{\HH^{-1}}^{2p-2}\sum_{j\in \N^+} \|(G(X^N(s))-G(X^M(s)))e_j)\|^2\lambda_j^{-1}ds\Big]\\
&\le C(\epsilon)\E\Big[\int_0^t \|X^N(s)-P^NX^M(s)\|_{\HH^{-1}}^{2p}ds\Big]\\
&\quad+
\epsilon \E\Big[\int_0^t  \|X^N(s)-P^NX^M(s)\|_{\HH^{-1}}^{2p-2}\|X^N(s)-P^NX^M(s)\|_{\HH^{1}}^2ds\Big]\\
&\quad+ C\E\Big[\int_0^t  \|X^N(s)-P^NX^M(s)\|_{\HH^{-1}}^{2p-2}\|(I-P^N)X^M(s)\|^2ds\Big].
\end{align*}
The above estimations, combined with the Young inequality and  the martingale property of the stochastic integral $I_2$,  yield that  for $\beta\in (0,1)$ and small $\epsilon>0$, 
\begin{align*}
&\E\Big[\|X^N(t)-P^NX^M(t)\|_{\HH^{-1}}^{2p}\Big]\\
&\le -2p\int_0^t\E\Big[\|X^N(s)-P^NX^M(s)\|_{\HH^{-1}}^{2p-2}\|X^N(s)-P^NX^{M}(s)\|_{\HH^1}^2\Big]ds\\
&\quad+\E\Big[I_1(t)+I_2(t)+I_3(t)+I_4(t)\Big]\\
&\le 
C(\epsilon)\int_0^t \E\Big[\|X^N(s)-P^NX^M(s)\|_{\HH^{-1}}^{2p}\Big]ds\\
&\quad +
C(\epsilon)\lambda_N^{- \gamma  -\beta }\int_0^t
\E\Big[\|X^N(s)-P^NX^M(s)\|_{\HH^{-1}}^{2p-2}\Big(1+\|X^N\|_{\HH^{\beta}}^4\\
&\quad
+\|X^M\|_{\HH^{\beta}}^4+\|X^N\|_E^4+\|X^M\|_E^4\Big) \|X^M(s)\|_{\HH^{ \gamma }}^2\Big]ds\\
&\quad +
C\int_0^t \E\Big[ \|X^N(s)-P^NX^M(s)\|_{\HH^{-1}}^{2p-2}\|(I-P^N)X^M(s)\|^2\Big]ds\\
&\le C(\epsilon)\int_0^t \E\Big[\|X^N(s)-P^NX^M(s)\|_{\HH^{-1}}^{2p}\Big]ds+C(\epsilon)\lambda_N^{- \gamma p  -\beta p}\int_0^t\E\Big[\Big(1+\|X^N\|_{\HH^{\beta}}^4\\
&\quad
+\|X^M\|_{\HH^{\beta}}^4+\|X^N\|_E^4+\|X^M\|_E^4\Big)^{p} \|X^M(s)\|_{\HH^{ \gamma }}^{2p}\Big]ds\\
&\quad
+C\lambda_{N}^{-\gamma p}\int_0^t\E\Big[\|X^M(s)\|_{\HH^{\gamma}}^{2p}\Big]ds.
\end{align*}
Combining the regularity estimates of $X^N$ and $X^M$ in Proposition \ref{prop-spa}, we complete the proof by using the Gronwall inequality.
\end{proof}

Now, we are in the position to deduce the error estimate in $\HH$, which implies that $\{X^N\}_{N\in \N^+}$  is a Cauchy sequence in $L^{p}(\Omega; C([0,T];\HH))$. The following strong convergence rate of the spectral Galerkin approximation is also applied for analyzing the strong convergence of the full discretization and its density function in \cite{CH20}. 

\begin{tm}\label{str-err-h1}
Let $ X_0\in \HH^{\gamma}$,  $\gamma \in (0,\frac 32)$, $T>0$, $p\ge1$ and  $\sup\limits_{N\in \N^+}\|X^N_0\|_E<\infty$.
Assume that $X^N$ and $X^M$ are the spectral Galerkin approximations with different parameters $N,M\in \N^+, N<M$. Then for $\tau\in (0,\gamma)$, it holds that 
\begin{align}\label{sup-err-h1}
&\E\Big[\sup_{t\in [0,T]}\|X^N(t)-X^M(t)\|^{2p}\Big]
\le C(T,X_0,p) \lambda_{N}^{-\tau p}.
\end{align} 
for a positive constant $C(T,X_0,p)$.
\end{tm}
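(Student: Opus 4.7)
The plan is to deduce the $\HH$-error estimate \eqref{sup-err-h1} from the $\HH^{-1}$-error estimate in Proposition \ref{str-con} by means of Sobolev interpolation, together with the uniform $\HH^{\gamma}$-regularity provided by Proposition \ref{prop-spa}. First I would decompose
$$X^N(t) - X^M(t) = \bigl(X^N(t) - P^N X^M(t)\bigr) - (I - P^N) X^M(t),$$
and control the projection remainder $(I-P^N)X^M$ directly via $\|(I-P^N) v\| \le \lambda_N^{-\gamma/2} \|v\|_{\HH^{\gamma}}$ together with $\E\sup_{t\in[0,T]}\|X^M(t)\|_{\HH^{\gamma}}^{2p} \le C$; this already produces a contribution of order $\lambda_N^{-\gamma p}$, which is more than enough.

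For $e^{N,M}(t) := X^N(t) - P^N X^M(t)$, which lies in $P^N(\HH)$, I would apply the Sobolev interpolation
$$\|v\|_{\HH} \le C\,\|v\|_{\HH^{-1}}^{s/(s+1)}\|v\|_{\HH^{s}}^{1/(s+1)}, \qquad s \in (0,\gamma),$$
raise to the $2p$-th power, take $\sup_{t\in[0,T]}$, and then apply H\"older with conjugate exponents $(s+1)/s$ and $s+1$ to obtain
$$\E\sup_{t\in[0,T]}\|e^{N,M}(t)\|^{2p} \le C\,\Bigl(\E\sup_t\|e^{N,M}(t)\|_{\HH^{-1}}^{2p}\Bigr)^{s/(s+1)}\Bigl(\E\sup_t\|e^{N,M}(t)\|_{\HH^{s}}^{2p}\Bigr)^{1/(s+1)}.$$
The second factor is uniformly bounded in $N,M$ by the triangle inequality and Proposition \ref{prop-spa} for any $s<\gamma$. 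For the first factor, I would upgrade Proposition \ref{str-con} from a $\sup_t\E$ to an $\E\sup_t$ statement: this amounts to rerunning the It\^o-formula computation of the proof of Proposition \ref{str-con}, but applying the Burkholder--Davis--Gundy inequality to the stochastic term $I_2$ (instead of taking its expectation to vanish). The resulting square root is handled by Young's inequality, absorbing part of it into the dissipative $\|e^{N,M}\|_{\HH^{1}}^{2}$ contribution on the left-hand side, after which Gronwall's inequality closes the estimate at the same rate $\lambda_N^{-\gamma p}$.

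Combining the two pieces yields a rate of the form $\tau = \gamma s/(s+1)$ in $\HH$, strictly less than $\gamma$, which is consistent with the theorem's range $\tau\in(0,\gamma)$. The main technical obstacle is the $\E\sup_t$ upgrade of Proposition \ref{str-con}: the cubic nonlinearity $F$ generates terms that must be simultaneously controlled by the dissipative $\|e^{N,M}\|_{\HH^{1}}^{2}$ contribution, Lemma \ref{sob}, the uniform $\HH^{\gamma}$-regularity of $X^N$ and $X^M$, and Young's inequality, and interfacing these with the BDG square root requires exactly the same delicate balance that already makes the $\sup_t\E$ version of the estimate non-trivial.
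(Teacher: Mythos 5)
Your decomposition, your treatment of $(I-P^N)X^M$, and your proposed BDG upgrade of Proposition \ref{str-con} to an $\E\sup_t$ statement are all workable (for the upgrade, the quadratic variation factor is bounded by $\|e^{N,M}\|_{\HH^{-1}}^2\,\|X^N-X^M\|^2$ using $\sum_j\lambda_j^{-1}<\infty$, and the resulting $\|e^{N,M}\|^2\le\|e^{N,M}\|_{\HH^{-1}}\|e^{N,M}\|_{\HH^1}$ can indeed be absorbed into the weighted dissipative term). The genuine gap is quantitative. The interpolation $\|v\|\le\|v\|_{\HH^{-1}}^{s/(s+1)}\|v\|_{\HH^{s}}^{1/(s+1)}$ puts the weight $s/(s+1)$ on the only factor that carries a rate, and since the uniform regularity of Proposition \ref{prop-spa} caps $s$ at $\gamma<\frac32$, the best you can reach is $\tau=\gamma s/(s+1)<\gamma^2/(\gamma+1)\le\frac{3\gamma}{5}$. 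The theorem asserts the bound for \emph{every} $\tau\in(0,\gamma)$; your argument proves it only on a strictly smaller interval (for $\gamma=1$ you reach at most $\tau=\tfrac12$). Producing some $\tau<\gamma$ is not the same as covering all $\tau<\gamma$, and this loss is intrinsic to interpolating the $\HH$-norm of the solution difference between $\HH^{-1}$ and $\HH^{s}$.

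The paper avoids this cap by not converting the $\HH^{-1}$ estimate into an $\HH$ estimate through interpolation of the error itself. It runs a fresh Gronwall argument on the mild form of $X^N-P^NX^M$ in $\HH$: the drift difference is written by Taylor expansion and measured, via Lemma \ref{sob}, in $\HH^{-\beta}$ with $\beta\in(0,1)$, the $\beta$ negative derivatives being absorbed by the smoothing of $A^{1+\beta/2}S(t-s)$ inside the Duhamel integral (the singularity $\frac12+\frac\beta4$ stays integrable). Interpolating $\HH^{-\beta}$ between $\HH^{-1}$ and $\HH^{0}$ then puts the weight $\beta$ on the $\HH^{-1}$ factor, which carries the rate $\lambda_N^{-\gamma p}$ from Proposition \ref{str-con}, and only $1-\beta$ on the merely bounded $\HH$ factor; letting $\beta\uparrow 1$ recovers every $\tau<\gamma$. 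The stochastic term closes by Gronwall using the Lipschitz continuity of $G$, and the passage to $\E\sup_t$ is done by the factorization method rather than BDG. If you want to salvage your route, the interpolation must be applied where the semigroup can pay for the negative regularity, i.e.\ inside the convolution integral, not to $X^N(t)-P^NX^M(t)$ directly.
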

\begin{proof}
From the mild form of $X^N$ and $P^NX^M$, the smoothing effect \eqref{smo-eff} of $S(t)$, Lemma \ref{sob}, Proposition \ref{str-con}, the interpolation inequality and the Burkholder inequality,  it follows  that for $p\ge \max(\frac l2,2)$, $l>2$ and $\beta\in (0,1)$,
\begin{align*}
&\E\Big[\|X^N(t)-P^NX^M(t)\|^{2p}\Big]\\
&\le C(p,T)\big(\int_0^T(t-s)^{-\frac {l}{2(l-1)}}ds\big)^{\frac {2p(l-1)}l}\E\Big[\Big(\int_0^T \big(1+\|X^N\|_E^{2l}+\|X^M\|_E^{2l}+\|X^N\|_{\HH^{\beta}}^{2l}\\
&\quad +\|X^M\|_{\HH^{\beta}}^{2l}\big)\big\|X^N(s)-X^M(s)\big\|_{\HH^{-1}}^{\beta l}\big\|X^N(s)-X^M(s)\big\|^{(1-\beta)l}ds\Big)^{\frac {2p}{l}}
\Big]\\
&\quad+C(p,T)\E\Big[\big\|\int_0^t S(t-s)(G(X^N(s))-G(X^M(s)))dW(s)\big\|^{2p}\Big]\\
&\le C(p,T,X_0,\beta)\lambda_N^{-\beta p \gamma }+
C(p,T)\E\Big[\Big(\int_0^t\sum_{i\in \N^+}\|S(t-s)(G(X^N(s))\\
&\quad-G(X^M(s)))e_i\|^2ds\Big)^p\Big]\\
&\le C(p,T,X_0,\beta)\lambda_N^{-\beta p \gamma }+
C(p,T)\E\Big[\Big(\int_0^t\|X^N(s)-P^NX^M(s)\|^4ds\Big)^{\frac p2}\Big]\\
&\le C(p,T,X_0,\beta)\lambda_N^{-\beta p \gamma }+
C(p,T)\int_0^t\E\Big[\|X^N(s)-P^NX^M(s)\|^{2p}\Big]ds.
\end{align*}
From the Gronwall inequality, it follows that 
\begin{align}\label{sup-e-err1}
\sup_{t\in [0,T]}\E\Big[\|X^N(t)-P^NX^M(t)\|^{2p}\Big]
&\le C(X_0,T,p,\gamma)\lambda_N^{-\beta p \gamma }.
\end{align}

Furthermore, taking supreme over $t\in [0,T]$, similar arguments yield that for $p\ge \max(\frac l2,2)$, $l>2$ and $\beta\in (0,1)$,
\begin{align*}
&\E\Big[\sup_{t\in [0,T]}\|X^N(t)-P^NX^M(t)\|^{2p}\Big]\\
&\le 
C(p,T,X_0,\beta)\lambda_N^{-\beta p \gamma }\\
&
+C(p)\E\Big[\sup_{t\in [0,T]}\big\|\int_0^t S(t-s)(G(X^N(s))-G(X^M(s)))dW(s)\big\|^{2p}\Big].
\end{align*}
The factorization method yields that for $\alpha_1>\frac 1q$, $q>1$,
\begin{align*}
&\E\Big[\sup_{t\in [0,T]}\big\|\int_0^t S(t-s)(G(X^N(s))-G(X^M(s)))dW(s)\big\|^{2p}\Big]\\
&\le C(p,q,T)\E\Big[\|Z_{\alpha_1,N,M}\|_{L^q([0,T];  \HH)}^{2p}\Big],
\end{align*}
where $Z_{\alpha_1,N,M}(s)=\int_0^s(s-r)^{-\alpha_1}S(s-r)
P^N(G(X^N(r))-G(X^M(r)))dW(r)$.
Thus it suffices to estimate $\E\Big[\|Z_{\alpha_1,N,M}\|_{L^q([0,T];  H)}^{2p}\Big]$. 
From the H\"older and Burkholder inequalities,  Proposition \eqref{prop-spa} and \eqref{sup-e-err},  it follows that for ${2p}\ge q$,
\begin{align*}
&\E\Big[\|Z_{\alpha_1,N,M}\|_{L^q([0,T];  \HH)}^{2p}\Big]\\
&=
\E\Big[ \Big(\int_0^T\Big\|\int_0^s(s-r)^{-\alpha_1}S(s-r)
P^N(G(X^N(s))-G(X^M(s)))dW(r)\Big\|^qds\Big)^{\frac {2p}q}\Big]\\
&\le C(T,p)\int_0^T\E\Big[ \Big\|\int_0^s(s-r)^{-\alpha_1}S(s-r)
P^N(G(X^N(s))-G(X^M(s)))dW(r)\Big\|^{2p}\Big]ds\\
&\le C\int_0^T\E\Big[ \Big(\int_0^s(s-r)^{-2\alpha_1}\sum_{i\in \N^+}\Big\|S(s-r)
P^N(G(X^N(s))-G(X^M(s)))e_i\Big\|^2dr\Big)^{p}\Big]ds\\
&\le C\int_0^T\E\Big[ \Big(\int_0^s(s-r)^{-(2\alpha_1+\frac 14)}\|X^N(s)-P^NX^M(s)\|^2dr\Big)^{p}ds\Big]+C(T,p)\lambda_N^{-\gamma p}\\
&\le C\lambda_N^{-\gamma\beta p}.
\end{align*}
Combining the above estimates and $$\E\Big[\|\sup\limits_{t\in[0,T]}(I-P^N)X^M(t)\|^{2p}\Big]
\le C(T,X_0,p,\gamma) \lambda_{N}^{-\gamma p},$$ 
we complete the proof.

\end{proof}

\begin{rk}\label{rk-xn}
If  $X_0\in \HH^{\gamma}$, $\gamma>\frac 12$, then 
$\|X_0^N\|_E\le C(X_0)$ holds for every $N\in \N^+$.
If the bound of $\|X_0^N\|$ is not uniform, then by using \eqref{con-pro}, we have that
\begin{align*}
\E\big[\|X^N(t)\|^q_E\big]
&\le C(X_0,T,q)(1+t^{\min(-\frac 18-\epsilon+\frac \gamma 4,0)q}).
\end{align*}
As a result, it's is not hard to check that Proposition \ref{str-con}, Theorem \ref{str-err-h1} and Proposition \ref{well}
still hold with $p=1$, which is helpful for establishing the wellposedness result under mild assumptions.
\end{rk}

\section{Global existence and regularity estimate}
\label{sec-wel}

Based on the convergence of the approximate process $X^N$, we are in a position to show the global existence 
of the unique solution for Eq. \eqref{spde} driven by multiplicative space-time white noise.

\begin{prop}\label{well}
Let $T>0$, $ X_0\in \HH^{\gamma}$,  $\gamma \in (0,\frac 32)$, $p\ge1$ and  $\sup\limits_{N\in \N^+}\|X^N_0\|_E\le C(X_0)$. Then
Eq. \eqref{spde} possesses a unique mild solution
$X$ in $L^{2p}(\Omega; C(0,T; \HH))$.
\end{prop}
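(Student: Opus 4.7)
The plan is to deduce existence from the Cauchy property established in Theorem \ref{str-err-h1}, pass to the limit in the mild formulation of Eq. \eqref{sga} to identify the limit $X$ as a mild solution of Eq. \eqref{spde}, and then obtain uniqueness via an energy estimate in $\HH^{-1}$ analogous to the one used in Proposition \ref{str-con}.

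First, Theorem \ref{str-err-h1} gives that $\{X^N\}_{N\in \N^+}$ is a Cauchy sequence in the Banach space $L^{2p}(\Omega; C([0,T]; \HH))$. By completeness, there exists a limit process $X$ in this space, and along a subsequence the convergence is $\PP$-a.s. uniform on $[0,T]$. Moreover, the uniform a priori and regularity estimates from Propositions \ref{prop-spa} and \ref{pri-x-e} transfer to $X$ by Fatou's lemma and lower semi-continuity of the norms involved.

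Second, to verify that $X$ is a mild solution, I pass to the limit $N\to\infty$ in the mild form
\begin{align*}
X^N(t) &= S(t)P^N X_0 - \int_0^t AS(t-s) P^N F(X^N(s))\,ds\\
&\quad + \int_0^t S(t-s) P^N G(X^N(s))\,dW(s).
\end{align*}
The linear term converges to $S(t) X_0$ in $\HH$ by the boundedness of $S(t)$ and $P^N$. For the drift term, I rely on the smoothing effect $\|AS(t-s)\phi\|\le C(t-s)^{-1/2}\|\phi\|$ from \eqref{smo-eff} together with the cubic Lipschitz estimate $\|F(u)-F(v)\|\le C(1+\|u\|_E^2+\|v\|_E^2)\|u-v\|$; combining this with the uniform $E$-bound of Proposition \ref{pri-x-e} and the $\HH$-convergence of $X^N$ yields the convergence of the drift integral via dominated convergence. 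The stochastic integral is handled by the Burkholder inequality together with the Lipschitz/sublinear growth property of $G$.

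Third, for uniqueness, let $X_1$ and $X_2$ be two mild solutions lying in $L^{2p}(\Omega; C([0,T]; \HH))$. Applying It\^o's formula (after a standard regularization) to $\|X_1(t)-X_2(t)\|_{\HH^{-1}}^{2p}$ and mimicking the computation of Proposition \ref{str-con} --- now without the projection error terms --- using the monotonicity of $-F$, Lemma \ref{sob}, and the Burkholder inequality, one arrives at an inequality of the form
\begin{align*}
\E\Big[\|X_1(t)-X_2(t)\|_{\HH^{-1}}^{2p}\Big] \le C\int_0^t \E\Big[\|X_1(s)-X_2(s)\|_{\HH^{-1}}^{2p}\Big]\,ds,
\end{align*}
whence Gronwall's inequality yields $X_1\equiv X_2$ almost surely on $[0,T]$. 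The main obstacle throughout is the passage to the limit in the cubic nonlinearity $F$, since naive estimates based on the $\HH$-norm alone do not close: the pathwise $E$-bound of Proposition \ref{pri-x-e} uniform in $N$ is precisely what is needed to dominate $F(X^N)-F(X)$ via the cubic Lipschitz bound together with the integrable singularity $(t-s)^{-1/2}$ supplied by the smoothing effect of $AS(t-s)$.
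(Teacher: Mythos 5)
Your existence argument coincides with the paper's: both take the limit $X$ of the Cauchy sequence $\{X^N\}$ from Theorem \ref{str-err-h1}, transfer the a priori bounds by Fatou, and identify $X$ as a mild solution by passing to the limit in the mild form of Eq.~\eqref{sga}, using the smoothing effect $\|AS(t-s)\phi\|\le C(t-s)^{-1/2}\|\phi\|$, the local Lipschitz estimate for $F$ weighted by $E$-norms, and the factorization/Burkholder machinery for the stochastic integral. That part is fine.

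The uniqueness step is where you diverge from the paper, and where there is a genuine gap. You propose to apply It\^o's formula directly to $\|X_1(t)-X_2(t)\|_{\HH^{-1}}^{2p}$ for two arbitrary mild solutions in $L^{2p}(\Omega;C([0,T];\HH))$ and to close the estimate by the monotonicity of $-F$ and interpolation. But a mild solution known only to lie in $C([0,T];\HH)$ does not have enough regularity for this: the drift $A\big(AX+F(X)\big)$ does not live in any space in which the $\HH^{-1}$ It\^o formula is justified ($F(X)$ is cubic in an $L^2$-valued process), the absorption step needs $\|X_1-X_2\|_{\HH^1}^2$ to be finite for a.e.\ $t$, and the ``standard regularization'' you invoke (Galerkin or Yosida) reintroduces exactly the commutator/projection error terms that Proposition \ref{str-con} had to control --- and controlling them there required the uniform $\HH^{\gamma}$ and $E$ bounds of Propositions \ref{prop-spa} and \ref{pri-x-e}, bounds that an arbitrary competing mild solution is not assumed to possess. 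The paper avoids this by a different mechanism: it localizes with the stopping time $\tau_R=\inf\{t:\|X(t)\|>R\}$ and splits $X_i=Y_i+Z_i$. The stochastic convolution difference $Z_1-Z_2$ is estimated in $E$ by the factorization method, while the random-PDE part $Y_1-Y_2$ is estimated by the chain rule in $\HH$ (not $\HH^{-1}$), using the pathwise parabolic bound $\int_0^{\tau_R}\|AY_i(s)\|^2\,ds\le C(R,T,Y_0)$ to control $\|X_i\|_E^4$ via Gagliardo--Nirenberg; the resulting Gronwall constant depends on $R$, and uniqueness on $[0,T]$ follows by letting $R\to\infty$ since $\tau_R\to T$ a.s. If you want to keep your $\HH^{-1}$ route, you must either restrict the uniqueness class to solutions with the regularity needed to legitimize the It\^o formula, or build in a localization of the same kind; as written, the step ``one arrives at a Gronwall inequality'' is not justified.
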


\begin{proof}
We first show the local uniqueness of the mild solution for Eq. \eqref{spde}. 
Let $\tau_R:=\inf\{ t\in  [0,T] \big| \|X(t)\|>R\}$. 
Then   
the uniqueness in $ [0,\tau_R]$ is obtained due to the Lipschitz continuity of $G$ and the local Lipschitz continuity of $F$. More precisely, assume that we have two different mild solutions $X_1$ and $X_2$ for Eq. \eqref{spde} with the same initial datum $X_0$.
Next, we prove the local uniqueness, i.e., in each $[0,\tau_R]$, $X_1=X_2$, a.s. 
Since the decompositions $X_1=Y_1+Z_1$ and $X_2=Y_2+Z_2$, we have for $t\in [0,\tau_R]$,
\begin{align*} 
d(Y_1(t)-Y_2 (t))
&=-A^2(Y_1(t)-Y_2 (t))dt
-A(F(X_1(t))-F(X_2(t)))dt,\\
Y_1(0)-Y_2 (0)&=0,
\end{align*}
and 
\begin{align*}
d(Z_1(t)-Z_2 (t))
&=-A^2(Z_1(t)-Z_2 (t))dt
+(G(X_1(t))-G(X_2(t)))dW(t),\\
Z_1(0)-Z_2(0)&=0.
\end{align*}
From the mild form of $Z_1-Z_2$ and the factorization method, it follows that 
\begin{align*}
&\E\Big[\sup_{t\in [0,\tau_R]}\|Z_1(t)-Z_2 (t)\|_E^p\Big]\\
&=\E\Big[\sup_{t\in [0,\tau_R]}\Big\|\int_0^t S(t-s) (G(X_1(s))-G(X_2(s)))dW(s)\Big\|^p\Big]\\
&\le C(q,T)
\E \Big[ \int_0^{\tau_R}\Big(\int_0^t (t-s)^{-2\alpha_1-\frac 14} \sum_{i\in \N^+}\|S(t-s) (G(X_1(s))\\
&\qquad -G(X_2(s)))e_i\|^2ds\Big)^{\frac p2}dt\Big]\\
&\le C(q,T) \E\Big[\sup_{t\in [0,\tau_R]}\big\| X_1(t)-X_2(t)\big\|^p\Big],
\end{align*}
where  $\alpha_1>\frac 1p+\frac 18$ for large enough  $p>1$.
Similar arguments, together with the Young and Gagliardo--Nirenberg inequality, yield that for $t\in [0,\tau_R]$ and for some $\epsilon<1$,
\begin{align*}
\|Y_1(t)-Y_2(t)\|^2
&\le -\int_0^t2\|A(Y_1(s)-Y_2(s))\|^2ds\\
&\quad+2\int_0^t\<-A(F(X_1(s))-F(X_2(s))), Y_1(t)-Y_2(t)\>ds\\
&\le C(\epsilon)\int_0^t \|F(X_1(s))-F(X_2(s))\|^2ds\\
&\le  C(\epsilon)\int_0^t \|X_1(s)-X_2(s)\|^2(1+\|X_1(s)\|_E^4+\|X_2(s)\|_{E}^4)ds\\
&\le C(\epsilon)\int_0^t \|X_1(s)-X_2(s)\|^2(1+
\|AY_1(s)\|^2+\|AY_2(s)\|^2+\|Y_1(s)\|^6\\
&\qquad+\|Y_2(s)\|^6
+\|Z_1(s)\|_E^4+\|Z_2(s)\|_E^4)ds.
\end{align*}
Notice that for $i=1,2$ and $t\in [0,\tau_R]$, 
\begin{align*}
\int_0^t\|AY_i(s)\|^2ds&\le C\|Y_0\|^2
+C\int_0^t(\|Z_i(s)\|_E^2+1)\|Y_i(s)\|_{L^4}^4ds\\
&\quad+  C\int_0^t
(1+\|\nabla Y_i(s)\|^2+\|Z_i(s)\|_{L^8}^8)ds\\
&\le C(R,T,Y_0)<\infty.
\end{align*}
By Gronwall's inequality, we get 
\begin{align*}
\|Y_1(t)-Y_2(t)\|^2
\le \exp(C(R, T, Y_0))\int_0^t\|X_1(s)-X_2(s)\|^2ds.
\end{align*}
From the previous estimates, we conclude that 
for $0\le s\le t\le \tau_R$ and $p\ge1$,
\begin{align*}
&\|X_1(s)-X_2(s)\|^{2p}\\
&\le 
C_p\|Y_1(s)-Y_2(s)\|^{2p}+C_p \|Z_1(s)-Z_2(s)\|^{2p}\\
&\le 
C_p\exp(C(R, T, Y_0))\int_0^{s}\|X_1(s)-X_2(s)\|^{2p}ds
+
C_p \|Z_1(s)-Z_2(s)\|^{2p},
\end{align*}
which, together with Gronwall's inequality, yields that 
\begin{align*}
\|X_1(s)-X_2(s)\|^{2p}
&\le \exp(C(p,T)\exp(C(R, T, Y_0)))\|Z_1(s)-Z_2(s)\|^{2p}. 
\end{align*}
Taking expectation and using the Burkholder inequality, we have  for large enough $q>1$,
\begin{align*}
&\E\Big[\sup_{s\in [0,t]}\|X_1(s)-X_2(s)\|^{2p}\Big]\\
&\le \exp(C(p,T)\exp(C(R, T, Y_0)))
\E\Big[\sup_{s\in [0,t]}\|Z_1(s)-Z_2(s)\|^{2p}\Big]\\
&\le  C(R,T,p,Y_0)\E \Big[\int_0^{t}\Big(\int_0^s (s-r)^{-\frac 14-\frac 2q} \| G(X_1(r))-G(X_2(r))\|^2dr\Big)^{p}dt\Big]\\
&\le C(R,T,p,Y_0)
\int_0^{t}  \E \Big[\sup_{r\in[0,s]}\| X_1(r))-X_2(r)\|^{2p}\Big]ds.
\end{align*}
From Gronwall's inequality, it follows that for any $t\le \tau_R$,
\begin{align*}
\E\Big[\sup_{s\in[0,t]}\|X_1(s)-X_2(s)\|^{2p}\Big]=0.
\end{align*}
Thus the local uniqueness of the mild solution holds. 
Once the global existence of the mild solution holds,
 we have 
\begin{align*}
\E\Big[\sup_{s\in[0,T]}\|X_1(s)-X_2(s)\|^{2p}\Big]&\le \lim_{R\to \infty}\E\Big[\sup_{s\in[0,\tau_R]}\|X_1(s)-X_2(s)\|^{2p}\Big]=0,
\end{align*}
since it holds that $\lim_{R\to \infty}\tau_R =T, a.s.$

In the following, we show the existence of the global mild solution.
According to Theorem \ref{str-err-h1}, we have 
that 
$\{X^N\}_{N\in \N^+}$ is a Cauchy sequence in $L^{2p}(\Omega; C([0,T];\HH))$. Then we denote $X$ the limit of $X^N$ in $L^{2p}(\Omega; C([0,T];\HH))$. From $\lim\limits_{N\to \infty}\|X^N-X\|_{L^{2p}(\Omega; C([0,T];\HH))}=0$, it follows that  
for each $i\in \N^+$,
\begin{align*}
\lim_{N\to \infty}\||\<X^N-X,e_i\>|\|_{L^{2p}(\Omega; C([0,T];\R))}=0,
\end{align*}
which implies that for a subsequence $\{X^{N_k}\}_{k\in \N^+}$,
\begin{align*}
\lim_{k\to \infty} \sup_{t\in[0,T]}|\<X^{N_k}(t),e_i\>|^2\lambda_i^{\gamma}
= \sup_{t\in[0,T]} |\<X(t),e_i\>|^2\lambda_i^{\gamma}, \; \text{a.s.},
\end{align*}
and $\lim_{N\to \infty} X^{N_k} = X$ in $C([0,T];\HH^{\gamma})$, a.s. 

The uniform boundedness of $\|X^N\|_{L^{2p}(\Omega;C([0,T];\HH^{\gamma}))}$, together with Fatou's lemma, yields that 
\begin{align*}
\|X\|^{2p}_{L^{2p}(\Omega;C([0,T];\HH^{\gamma}))}
&\le \liminf_{N\to \infty}\|X^{N_k}\|^{2p}_{L^{2p}(\Omega;C([0,T];\HH^{\gamma}))}\le C(T,X_0,p).
\end{align*}
Thus it suffices to prove that $X$ is the mild solution of Eq. \eqref{spde}, i.e.,
\begin{align*}
X(t)=S(t)X_0+\int_0^tS(t-s)F(X(s))ds
+\int_0^tS(t-s)G(X(s))dW(s),\; \text{a.s.}
\end{align*}
The mild form of $X^N$ and \eqref{smo-eff} yield that
\begin{align*}
&Err:=\|S(t)(I-P^N)X_0\|_{L^{2p}(\Omega; C([0,T;\HH]))}\\
&+\Big\|\int_0^tS(t-s)A(F(X(s))-P^NF(X^N(s)))ds\Big\|_{L^{2p}(\Omega; C([0,T];\HH))}\\
&+\Big\|\int_0^tS(t-s)(G(X(s))-P^NG(X^N(s)))dW(s)\Big\|_{L^{2p}(\Omega; C([0,T];\HH))}\\
&\le C(T,X_0)\lambda_N^{-\frac \gamma 2}+\Big\|\int_0^tS(t-s)A(I-P^N)F(X^N(s))ds\Big\|_{L^{2p}(\Omega; C([0,T];\HH))}\\
&+\Big\|\int_0^tS(t-s)AP^N(F(X(s)-F(X^N(s)))ds\Big\|_{L^{2p}(\Omega; C([0,T];\HH))}\\
&\quad+\Big\|\int_0^tS(t-s)(I-P^N)G(X(s)dW(s)\Big\|_{L^{2p}(\Omega; C([0,T];\HH))}\\
&+\Big\|\int_0^tS(t-s)P^N(G(X(s))-G(X^N(s)))dW(s)\Big\|_{L^{2p}(\Omega; C([0,T];\HH))}\\
&\le C(T,X_0,p)\lambda_N^{-\frac \gamma 2}+
C(T,p)\lambda_N^{-\frac \gamma 2}
\Big\|\int_0^t(t-s)^{-\frac 12-\frac \gamma 4}\|F(X^N(s))\|_{\HH}ds\Big\|_{L^{2p}(\Omega; C([0,T];\R))}\\
&+C(T,p)\Big\|\int_0^t (t-s)^{-\frac 12}(1+\|X(s)\|_E^2+\|X^N(s)\|_E^2)\\
&\qquad\|X(s)-X^N(s)\|ds\Big\|_{L^{2p}(\Omega; C([0,T];\R))}\\
&+C(T,p)\Big\|\int_0^t(t-s)^{-\alpha_1}S(t-s)(I-P^N)G(X(s)dW(s)\Big\|_{L^{2p}(\Omega; L^{q}([0,T];\HH))}\\
&+C(T,p)\Big\|\int_0^t(t-s)^{-\alpha_1}S(t-s)P^N(G(X(s))-G(X^N(s)))dW(s)\Big\|_{L^{2p}(\Omega; L^{q}([0,T];\HH))}
\end{align*}
According to the factorization method, the Burkholder inequality, \eqref{smo-eff} and the error estimate \eqref{sup-e-err1}, we have that 
for $ 2p\ge q$, $\alpha_1>\frac 1q$, sufficient large  $q>1$ and $\beta\in (0,1)$,

\begin{align*}
&Err
\le C(T,X_0,p)\lambda_N^{-\frac {\beta\gamma} 2}\\
&
+C(T,p)\Big\|\int_0^t(t-s)^{-\alpha_1}S(t-s)(I-P^N)G(X(s)dW(s)\Big\|_{L^{2p}(\Omega; L^{q}([0,T];\HH))}\\
&+C(T,p)\Big\|\int_0^t(t-s)^{-\alpha_1}S(t-s)P^N(G(X(s))-G(X^N(s)))dW(s)\Big\|_{L^{2p}(\Omega; L^{q}([0,T];\HH))}\\
&\le C(T,X_0,p)\lambda_N^{-\frac {\beta\gamma} 2}\\
&
+C(T,p)\lambda_N^{-\frac \gamma 2}\Big\|\int_0^t(t-s)^{-2\alpha_1-\frac 14-\frac \gamma 2}(1+\|X(s)\|^{2\alpha})ds\Big\|_{L^{2p}(\Omega; L^{q}([0,T];\R))}\\
&+C(T,p)\Big\|\int_0^t(t-s)^{-2\alpha_1-\frac 14}\|X(s)-X^N(s)\|ds\Big\|_{L^{2p}(\Omega; L^{q}([0,T];\R))}\\
&\le C(T,X_0,p)\lambda_N^{-\frac {\beta\gamma} 2}.
\end{align*}
The above estimation implies that 
$$S(t)X_0+\int_0^tS(t-s)F(X(s))ds
+\int_0^tS(t-s)G(X(s))dW(s)$$ is the limit of $X^N$
in $L^{2p}(\Omega; C([0,T];\HH))$. 

By the uniqueness of the limit in $L^{2p}(\Omega; C([0,T];\HH))$, we conclude that $X(t)=S(t)X_0+\int_0^tS(t-s)F(X(s))ds
+\int_0^tS(t-s)G(X(s))dW(s)$, a.s.

\end{proof}

From the arguments in the above proof, we immediately 
get that the following well-posedness result under mild assumptions.
As a cost, we can not obtain the optimal convergence rate of this Cauchy sequence $\{X^N\}_{N\in \mathbb N^+}$.

\begin{tm}\label{tm-well}
Let $T>0$, $ X_0\in \HH^{\gamma}, \gamma>0$, $p\ge1$. Then
Eq. \eqref{spde} possesses a unique mild solution
$X$ in $L^{2p}(\Omega; C([0,T]; \HH))$.
\end{tm}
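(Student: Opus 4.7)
The plan is to repeat the scheme of Proposition \ref{well} under the weaker hypothesis $X_0\in\HH^{\gamma}$, $\gamma>0$, using the compensation pointed out in Remark \ref{rk-xn}. The property now lost is the uniform bound $\sup_{N}\|X^N_0\|_E<\infty$. Since $P^N$ is uniformly bounded on $\HH^{\gamma}$, one still has $\|X^N_0\|_{\HH^{\gamma}}\le\|X_0\|_{\HH^{\gamma}}$, and the contractivity \eqref{con-pro} of $S(t)$ combined with Remark \ref{rk-xn} yields the time-singular but integrable moment bound $\E[\|X^N(t)\|_E^q]\le C(X_0,T,q)(1+t^{\min(-\frac 18-\epsilon+\frac \gamma 4,0)q})$. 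All subsequent arguments go through provided each occurrence of an $E$-norm enters an integrable time-convolution, which I arrange by taking $p=1$ and splitting the resulting products by a single H\"older inequality.

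First I would re-derive the a priori and regularity estimates of Section \ref{sec-pri} for $X^N$, which retain the same form but with moment bounds involving an integrable time-singular factor. Next I would repeat the Cauchy estimate of Theorem \ref{str-err-h1} with $p=1$: the argument is identical except that the $E$-norm factors appearing via Lemma \ref{sob} are now bounded in some $L^{q_1}_t$-norm rather than uniformly in $t$, and the H\"older exponents against the Beta-kernel $(t-s)^{-l/(2(l-1))}$ must be chosen so that the resulting convolution remains integrable. The outcome is $\E[\sup_{t\in[0,T]}\|X^N(t)-X^M(t)\|^2]\le C(T,X_0)\lambda_N^{-\tau}$ for some $\tau\in(0,\gamma)$; hence $\{X^N\}$ is Cauchy in $L^2(\Omega;C([0,T];\HH))$ and I denote its limit by $X$.

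To identify $X$ as a mild solution, I would mimic the error estimate at the end of the proof of Proposition \ref{well} with $p=1$. The factorization method and Burkholder inequality treat the stochastic convolution, and the deterministic convolution is controlled via the smoothing effect \eqref{smo-eff} together with the local Lipschitz property of $F$, the relevant $E$-norm factors being paired against integrable Beta-kernels by a single H\"older split. Passing to the limit $N\to\infty$ shows that $X$ satisfies the mild formulation almost surely. Higher moments $\E[\sup_{t\in[0,T]}\|X(t)\|^{2p}]<\infty$ for any $p\ge1$ then follow from Fatou's lemma along an a.s. convergent subsequence $\{X^{N_k}\}$, invoking the uniform-in-$N$ $L^{2p}(\Omega;C([0,T];\HH))$ bound on $X^N$ that is obtained from the same a priori estimates.

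Finally, uniqueness is the local argument from Proposition \ref{well}: for two mild solutions $X_1,X_2$, I would introduce $\tau_R:=\inf\{t:\max(\|X_1(t)\|,\|X_2(t)\|)>R\}$, exploit the decomposition $X_i=Y_i+Z_i$, and apply Gronwall's inequality on $[0,\tau_R]$ to deduce $X_1=X_2$ on that interval; letting $R\to\infty$ and noting $\tau_R\to T$ a.s. concludes the proof. The main obstacle is the simultaneous bookkeeping of H\"older exponents so that every Beta-kernel exponent stays strictly greater than $-1$ while the temporal singularity $t^{-\frac 18-\epsilon+\frac \gamma 4}$ from Remark \ref{rk-xn} is absorbed; this is feasible because $\gamma>0$ ensures $-\frac 18-\epsilon+\frac \gamma 4>-1$ for sufficiently small $\epsilon$, and the H\"older duality parameters can be pushed arbitrarily close to the boundary values by enlarging the moment order $q$ in the Burkholder inequality. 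The price paid is the loss of the optimal convergence rate $\lambda_N^{-\gamma p}$ obtained in Theorem \ref{str-err-h1}.
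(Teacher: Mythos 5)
Your proposal is correct and follows essentially the same route as the paper: both rest on Remark \ref{rk-xn} to run the $p=1$ versions of Proposition \ref{str-con}, Theorem \ref{str-err-h1} and Proposition \ref{well} despite the loss of the uniform bound on $\|X_0^N\|_E$, and then upgrade the integrability of the limit using the $N$-uniform moment bounds of Lemma \ref{pri-xn}. The only cosmetic difference is the final step: you invoke Fatou's lemma along an a.s.\ convergent subsequence to get $X\in L^{2p}(\Omega;C([0,T];\HH))$, whereas the paper interpolates $\|X-X^N\|_{L^{2p}}\le\|X-X^N\|_{L^{4p-2}}^{\frac{2p-1}{2p}}\|X-X^N\|_{L^{2}}^{\frac{1}{2p}}$ to show that $\{X^N\}$ is in fact Cauchy in $L^{2p}(\Omega;C([0,T];\HH))$ --- a marginally stronger conclusion, but both suffice for the statement.
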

\begin{proof}
Since the strong convergence in Theorem \ref{str-err-h1} holds with $p=1$ (see Remark \ref{rk-xn}),
we have that 
$\{X^N\}_{N\in \N^+}$ is a Cauchy sequence 
in $L^{2}(\Omega; C([0,T]; \HH))$, which implies that there exists a subsequence $\{X^{N_k}\}_{k\in \N^+}$  converging to $X$ in $C([0,T];\HH)$ a.s.
Notice that Lemma \ref{pri-xn} implies that $X^N\in L^{2p}(\Omega; C([0,T];\HH))$ for any $p\ge 1$.
By using the H\"older inequality and Fatou's lemma, we obtain
\begin{align*}
&\|X-X^N\|_{L^{2p}(\Omega; C([0,T];\HH))}\\
&\le \|X-X^N\|_{L^{4p-2}(\Omega; C([0,T];\HH))}
\|X-X^N\|_{L^{2}(\Omega; C([0,T];\HH))}\\
&\le \|X-X^N\|_{L^{2}(\Omega; C([0,T];\HH))}^{\frac 1{2p}}\big(C(X_0,T,p)+\lim_{k\to \infty}\|X^{N_k}\|_{L^{4p-2}(\Omega; C(0,T;\HH))}\big)^{\frac {2p-1}{2p}}\\
&\le  C(X_0,T,p)\|X-X^N\|_{L^{2}(\Omega; C([0,T];\HH))}^{\frac 1{2p}},
\end{align*}
which implies that $\{X^N\}_{N\in \N^+}$ is also a Cauchy sequence 
in $L^{2p}(\Omega; C([0,T]; \HH))$.
\end{proof}

\begin{rk}
Let $T>0$, $ X_0\in \HH$, $p\ge1$.
By the similar arguments in the proof of Proposition \ref{prop-tm} and Theorem \ref{tm-well},  one may  prove that 
Eq. \eqref{spde} possesses a unique mild solution
$X$ in $C([0,T];L^{2p}(\Omega; \HH))$.
\end{rk}

After establishing the well-posedness of Eq. \eqref{spde}, 
we turn to giving the following properties of the exact solution $X$. 
\begin{cor}
Let $ X_0\in \HH^{\gamma}$, $\gamma \in (0,\frac 32)$, $T>0$ and $p\ge1$.
The unique mild solution  $X$  of Eq. \eqref{spde}  satisfies 
\begin{align}\label{pri-x}
\E\Big[\sup_{t\in [0,T]}\big\|X(t)\big\|_{\HH^{\gamma}}^p\Big]
&\le C(X_0,T,p).
\end{align}
\end{cor}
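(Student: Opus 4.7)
The plan is to lift the uniform spatial regularity bound of the Galerkin approximations (Proposition \ref{prop-spa}) to the limit process $X$ via almost sure convergence along a subsequence together with Fatou's lemma. Essentially all the work has already been done in the proofs of Theorem \ref{str-err-h1} and Proposition \ref{well}; the corollary is a short byproduct of what was established there.

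First, I would recall from Theorem \ref{str-err-h1} and Proposition \ref{well} that $\{X^N\}_{N\in \N^+}$ is a Cauchy sequence in $L^{2p}(\Omega;C([0,T];\HH))$ with limit $X$, and that one can extract a subsequence $\{X^{N_k}\}_{k\in\N^+}$ with $X^{N_k}\to X$ in $C([0,T];\HH)$ almost surely. Next I would observe, exactly as in the coordinatewise argument carried out in the proof of Proposition \ref{well}, that along this subsequence and for each $i\in \N^+$,
\begin{align*}
\lim_{k\to\infty}\sup_{t\in [0,T]}\lambda_i^{\gamma}|\<X^{N_k}(t),e_i\>|^2
= \sup_{t\in[0,T]}\lambda_i^{\gamma}|\<X(t),e_i\>|^2 \qquad \text{a.s.}
\end{align*}
Summing over $i$ and applying the monotone convergence theorem (or Fatou) to the nonnegative terms yields the pathwise lower semicontinuity
\begin{align*}
\sup_{t\in [0,T]}\|X(t)\|_{\HH^{\gamma}}^{p} \le \liminf_{k\to\infty}\sup_{t\in [0,T]}\|X^{N_k}(t)\|_{\HH^{\gamma}}^{p}, \qquad \text{a.s.}
\end{align*}

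Taking expectation and applying Fatou's lemma in the $\omega$ variable, together with the uniform (in $N$) regularity estimate $\E[\sup_{t\in [0,T]}\|X^N(t)\|_{\HH^{\gamma}}^{p}] \le C(X_0,T,p)$ from Proposition \ref{prop-spa}, I obtain
\begin{align*}
\E\Big[\sup_{t\in [0,T]}\|X(t)\|_{\HH^{\gamma}}^{p}\Big]
\le \liminf_{k\to\infty}\E\Big[\sup_{t\in [0,T]}\|X^{N_k}(t)\|_{\HH^{\gamma}}^{p}\Big]
\le C(X_0,T,p),
\end{align*}
which is the desired estimate.

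There is essentially no hard step here: Proposition \ref{prop-spa} supplies the uniform bound, Proposition \ref{well} supplies the a.s. subsequential convergence, and Fatou combines the two. The only point requiring a modicum of care is justifying the pathwise lower semicontinuity of the $C([0,T];\HH^{\gamma})$-norm under $C([0,T];\HH)$-a.s. convergence, which is handled by the coordinate-by-coordinate argument already used in the proof of Proposition \ref{well}.
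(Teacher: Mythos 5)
Your proposal is correct and follows essentially the same route as the paper, which cites exactly Proposition \ref{prop-spa}, Theorem \ref{str-err-h1} and Fatou's lemma; you merely spell out the intermediate subsequence extraction and the coordinatewise lower semicontinuity of the $C([0,T];\HH^{\gamma})$-norm that the paper leaves implicit (and which already appears in the proof of Proposition \ref{well}).
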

\begin{proof}
By Proposition \ref{prop-spa}, Theorem \ref{str-err-h1}
and Fatou's Lemma,
we completes the proof. 
\end{proof}

\begin{prop}\label{x-e}
Let $ X_0\in E$, $T>0$ and $p\ge1$.
The unique mild solution  $X$  of Eq. \eqref{spde}  satisfies 
\begin{align*}
\sup_{t\in [0,T]}\E\Big[\big\|X(t)\big\|_{E}^p\Big]
&\le C(X_0,T,p).
\end{align*}
\end{prop}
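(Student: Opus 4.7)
The plan is to mimic, at the level of the limit process $X$, the argument used to control the $E$-norm of the Galerkin approximations in Proposition \ref{pri-x-e}. I decompose $X = Y + Z$ with stochastic convolution $Z(t) := \int_0^t S(t-s) G(X(s))\,dW(s)$ and remainder $Y(t) = S(t)X_0 - \int_0^t S(t-s) A F(X(s))\,ds$ satisfying the random PDE $dY(t) + (A^2 Y(t) + A F(X(t)))\,dt = 0$, $Y(0) = X_0$. The well-posedness result Theorem \ref{tm-well} supplies the baseline $X \in L^{2q}(\Omega; C([0,T];\HH))$ for every $q \ge 1$, which feeds into all subsequent estimates.

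For $Z$ I apply the factorization formula as in the proof of Lemma \ref{cor-zn}: for $\alpha_1 > 1/p + 1/8$ and suitable $p > 1$,
\begin{align*}
\E\Bigl[\sup_{t \in [0,T]} \|Z(t)\|_E^q\Bigr] \le C(T,q)\,\E\Bigl[\|\widetilde Y_{\alpha_1}\|_{L^p(0,T;\HH)}^q\Bigr],
\end{align*}
with $\widetilde Y_{\alpha_1}(s) := \int_0^s (s-r)^{-\alpha_1} S(s-r) G(X(r))\,dW(r)$. The Burkholder inequality, the smoothing effect \eqref{smo-eff}, the sublinear growth $|g(\xi)| \le C(1+|\xi|^{\alpha})$ with $\alpha<1$, and the $\HH$-moment bound from well-posedness then give $\E[\sup_t\|Z(t)\|_E^q] \le C(X_0,T,q)$.

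For $Y$, boundedness of the biharmonic semigroup on $E$ gives $\|S(t)X_0\|_E \le C\|X_0\|_E$, while splitting $S(t-s) = S(\tfrac{t-s}{2}) S(\tfrac{t-s}{2})$ and combining \eqref{smo-eff} with the $L^2 \to E$ contractivity in \eqref{con-pro} yields, since $F$ is cubic,
\begin{align*}
\Bigl\|\int_0^t S(t-s) A F(X(s))\,ds\Bigr\|_E \le C \int_0^t (t-s)^{-5/8} \bigl(1 + \|X(s)\|_{L^6}^3\bigr)\,ds.
\end{align*}
Since the exponent $-5/8$ is integrable, the estimate closes once a uniform moment bound on $\|X(s)\|_{L^6}$ is available.

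The main obstacle is therefore the bound $\sup_{s\in[0,T]} \E[\|X(s)\|_{L^6}^q] \le C(X_0,T,q)$ starting only from $X_0 \in E$. My plan is to first establish the analogous uniform bound for the approximations $X^N$ by revisiting the estimate on $\|Y^N\|_{L^6}$ in the proof of Proposition \ref{prop-spa}: the assumption $X_0 \in \HH^1$ was used only to control $\|S(t)X_0^N\|_{L^6}$, and under $X_0 \in E$ this is replaced by $\|S(t)X_0^N\|_{L^6} \le C\|X_0\|_E$ (or, as in Remark \ref{rk-xn}, by absorbing an integrable temporal singularity coming from \eqref{con-pro}), while the Gagliardo--Nirenberg argument and the $L^2(0,T;\HH)$-bound on $\|AY^N\|$ coming from Lemma \ref{pri-xn} remain unchanged. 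Then the strong convergence $X^N \to X$ in $L^{2q}(\Omega; C([0,T];\HH))$ from Theorem \ref{str-err-h1}, pointwise a.s.\ along a subsequence, combined with Fatou's lemma and the lower semicontinuity of the $L^6$-norm under a.e.\ pointwise convergence, transfers the bound from $X^N$ to $X$. Substituting back into the $Y$ and $Z$ estimates and taking $p$-th moments completes the proof.
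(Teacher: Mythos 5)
Your proposal is correct and follows essentially the same route as the paper, whose proof of this proposition consists of the single remark that it is "similar to that of Proposition \ref{pri-x-e}": decompose $X=Y+Z$, control $Z$ in $E$ by the factorization formula together with the Burkholder inequality, the smoothing/contractivity estimates and the sublinear growth of $G$, and control $Y$ through its mild form with the integrable singularity $(t-s)^{-5/8}$ and a uniform $L^6$ moment bound. Your additional care in re-deriving the $L^6$ bound from $X_0\in E$ rather than $X_0\in\HH^1$ (replacing $\|S(t)X_0^N\|_{L^6}\le C\|X_0^N\|_{\HH^1}$ by $\|S(t)X_0^N\|_{L^6}\le C\|X_0\|_E$ and transferring to $X$ via strong convergence and Fatou) fills in exactly the detail the paper leaves implicit, in a way consistent with its own Remark following Proposition \ref{prop-spa}.
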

\begin{proof}
The proof is similar to that of Proposition \ref{pri-x-e}.
\end{proof}

\begin{rk}\label{rk-conti}
Under the condition of Proposition \ref{x-e}, one can prove that 
the solution $X$ has almost surely continuous trajectories in $E$. In addition we assume that $X_0$ is $\beta$-H\"older continuous with $\beta\in (0,1)$. By using the fact that $S(\cdot)$ is an analytical semigroup in $E$ and similar arguments in the proof of Proposition \ref{prop-tm}, we have that $X$ is almost surely $\beta$-continuous in space and $\frac \beta 4$-continuous in time. 
\end{rk}

\begin{prop}\label{prop-tm}
Let $X_0\in \HH^{\gamma}$, $\gamma \in (0,\frac 32)$, $p\ge1$.
Then the unique mild solution  $X$  of Eq. \eqref{spde}  satisfies 
\begin{align}\label{reg-tm}
\|X(t)-X(s)\|_{L^p(\Omega;\HH)}
&\le C(X_0,T,p)(t-s)^{\frac \gamma  4}
\end{align}
for a positive constant $ C(X_0,T,p)$ and $0\le s\le t\le T$.
\end{prop}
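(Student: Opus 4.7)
The plan is to use the mild formulation together with the semigroup property of $S(t)=e^{-A^2 t}$. Writing
\[
X(t)=S(t-s)X(s)-\int_s^t S(t-r)AF(X(r))dr+\int_s^t S(t-r)G(X(r))dW(r),
\]
one obtains the decomposition
\[
X(t)-X(s)=(S(t-s)-I)X(s)+J_1+J_2,
\]
where $J_1$ and $J_2$ denote the drift and stochastic integrals on $[s,t]$. I would bound each piece separately in $L^p(\Omega;\HH)$ and collect the resulting powers of $t-s$.

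For the semigroup piece I would exploit the spectral calculus of $A$. From $|e^{-x}-1|^2\le 1\wedge x^2\le x^{\gamma/2}$ for $\gamma\in[0,4]$, it follows that $|e^{-\lambda_j^2\tau}-1|^2\le \lambda_j^{\gamma}\tau^{\gamma/2}$, and therefore $\|(S(\tau)-I)v\|\le \tau^{\gamma/4}\|v\|_{\HH^{\gamma}}$. Combined with the uniform $\HH^{\gamma}$ bound \eqref{pri-x} inherited from Proposition \ref{prop-spa} via Fatou's lemma, this gives the desired control $\|(S(t-s)-I)X(s)\|_{L^p(\Omega;\HH)}\le C(X_0,T,p)(t-s)^{\gamma/4}$.

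For the deterministic drift $J_1$, I would invoke the smoothing estimate \eqref{smo-eff} with $\beta=1$, which yields $\|AS(t-r)\|_{\LL(\HH)}\le C(t-r)^{-1/2}$. The polynomial bound $\|F(X(r))\|\le C(1+\|X(r)\|_{L^6}^3)$, together with the uniform-in-$N$ $L^6$ moment estimate for the Galerkin approximation established in the proof of Proposition \ref{prop-spa} (which passes to the limit by the strong convergence of $\{X^N\}$), produces a uniform bound on $\|F(X(r))\|_{L^p(\Omega;\HH)}$. Integrating then yields $\|J_1\|_{L^p(\Omega;\HH)}\le C(t-s)^{1/2}\le CT^{1/2-\gamma/4}(t-s)^{\gamma/4}$, since $\gamma/4<3/8<1/2$ on any bounded interval.

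For the stochastic integral $J_2$, the Burkholder inequality \eqref{Burk} reduces the task to controlling $\int_s^t\sum_{k\in\N}\|S(t-r)G(X(r))e_k\|^2dr$ in $L^{p/2}(\Omega)$. Expanding $S(\tau)G(X)e_k=\sum_j e^{-\lambda_j^2\tau}\langle G(X)e_k,e_j\rangle e_j$, summing first in $k$ by Parseval and using the sublinear growth of $g$ with $\sup_j\|e_j\|_E<\infty$, gives $\sum_k\|S(\tau)G(X)e_k\|^2\le C(1+\|X\|^{2\alpha})\sum_j e^{-2\lambda_j^2\tau}$. The one-dimensional heat-kernel-type bound $\sum_j e^{-c\lambda_j^2\tau}\le C\tau^{-1/4}$ (from $\lambda_j\sim j^2$) then produces $\int_s^t(t-r)^{-1/4}dr\le C(t-s)^{3/4}$, hence $\|J_2\|_{L^p(\Omega;\HH)}\le C(t-s)^{3/8}\le C(t-s)^{\gamma/4}$. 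The hardest part will be pinning down uniform polynomial control on $F(X(r))$ for the full range $\gamma\in(0,3/2)$ (passing through the sharper $L^6$ bound from the Galerkin side), and observing that the restriction $\gamma<3/2$ is exactly the threshold at which the stochastic piece realises the claimed Hölder exponent.
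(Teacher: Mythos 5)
Your proposal is correct, and it reaches the same rate by a genuinely different decomposition. The paper compares the two mild formulas issued from time $0$, so besides the three ``local'' terms you also estimate there, it must control the history terms $\int_0^s\|(S(t-r)-S(s-r))AF(X(r))\|\,dr$ and $\|\int_0^s(S(t-r)-S(s-r))G(X(r))\,dW(r)\|$, which it handles by writing $S(t-r)-S(s-r)=S(s-r)(S(t-s)-I)$ and trading $\|(S(t-s)-I)A^{-\gamma/2}\|\le C(t-s)^{\gamma/4}$ against the smoothing of $S(s-r)$; the initial term is then just $(S(t)-S(s))X_0$ with $X_0\in\HH^{\gamma}$. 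You instead restart the mild formula at time $s$, which collapses all history terms into the single increment $(S(t-s)-I)X(s)$; this is legitimate because applying $S(t-s)$ to the mild identity at time $s$ and using $S(t-s)S(s-r)=S(t-r)$ gives $X(t)=S(t-s)X(s)-\int_s^tS(t-r)AF(X(r))\,dr+\int_s^tS(t-r)G(X(r))\,dW(r)$ a.s.\ (you should state this one-line verification, since it is the only structural step your route adds). The price is that you need the uniform bound $\E\big[\sup_{t}\|X(t)\|_{\HH^{\gamma}}^{p}\big]\le C$ of \eqref{pri-x} rather than only the regularity of $X_0$ for the leading term, but the paper invokes \eqref{pri-x} anyway for its stochastic history term, so nothing extra is consumed. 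Your treatment of the two integrals over $[s,t]$ (smoothing with $\beta=1$ giving $(t-s)^{1/2}$ for the drift, and the trace bound $\sum_j e^{-c\lambda_j^2\tau}\le C\tau^{-1/4}$ giving $(t-s)^{3/8}$ for the noise, both dominated by $(t-s)^{\gamma/4}$ since $\gamma<3/2$) coincides with the paper's. You are also right to flag that bounding $\|F(X(r))\|$ requires uniform $L^6$-moments of $X$, inherited from the Galerkin estimates in the proof of Proposition \ref{prop-spa}; the paper's proof needs exactly the same input, though it leaves it implicit. Net effect: your route is shorter and avoids two semigroup-difference estimates, at the cost of one explicit appeal to the flow property of the mild solution.
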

\begin{proof}
From the mild form of $X$, it follows that 
\begin{align*}
\|X(t)-X(s))\|
&\le \left\|(S(t)-S(s))X_0\right\|
\\
&\quad+\int_0^s\Big\|(S(t-r)-S(s-r))AF(X(r))\Big\|dr
\\
&\quad+\int_s^t\Big\|S(t-r)AF(X(r))\Big\|dr\\
&\quad+\Big\|\int_0^s(S(t-r)-S(s-r))G(X(r))dW(r)\Big\|\\
&\quad+\Big\|\int_{s}^tS(t-r)G(X(r))dW(r)\Big\|.
\end{align*}
By taking $p$th moment, using  \eqref{pri-x} and the smoothing effect of $S(t)$, we get 
\begin{align*}
&\E\Big[\big\| (S(t)-S(s))X_0 \big\|^p\Big]\le C(T,X_0,p,\gamma)(t-s)^{\frac {\gamma p} 4},\\
&\E\Big[\int_0^s\big\|(S(t-r)-S(s-r))AF(X(r))\big\|^pdr\Big]\\
&\le C(T,p) \E\Big[\Big(\int_0^s (s-r)^{-\frac 12-\frac \gamma 4}\big\|(S(t-s)-I)A^{-\frac \gamma 2}\big\|
\big\|F(X(r))\big\|
dr \Big)^p\Big]\\
&\le C(T,X_0,p,\gamma)(t-s)^{\frac {\gamma p} 4},
\end{align*}
and 
\begin{align*}
\E\Big[\Big(\int_s^t\Big\|S(t-r)AF(X(r))\Big\|dr\Big)^p\Big]
&\le C(T,p)\E\Big[\Big(\int_s^t(t-r)^{-\frac 12}\|F(X(r))\|dr\Big)^p\Big]\\
&\le C(T,X_0,p)(t-s)^{\frac p2}.
\end{align*}
The Burkholder inequality and  \eqref{pri-x} yield that 
\begin{align*}
&\E\Big[\Big\|\int_0^s(S(t-r)-S(s-r))G(X(r))dW(r)\Big\|^p\Big]\\
&\le C(T,p)\E\Big[\Big(\int_0^s\sum_{i\in \N^+}\|S(s-r)(S(t-s)-I)G(X(r))e_i\|^2ds\Big)^{\frac p2}\Big]\\
&\le C(T,X_0,p,\gamma)(t-s)^{\frac {\gamma p} 4}.
\end{align*}
and 
\begin{align*}
&\E\Big[\Big\|\int_{s}^tS(t-r)G(X(r))dW(r)\Big\|^p\Big]\\
&\le C(T,p)\E\Big[\Big(\int_s^t\sum_{i\in \N^+}\|S(t-r)G(X(r))e_i\|^2ds\Big)^{\frac p2}\Big]\le C(T,p,X_0) (t-s)^{\frac {3p}8}.
\end{align*}
Combining all the above estimates, we complete the proof.
\end{proof}

\begin{rk}\label{con-xn}
Under the same condition as in Proposition \ref{prop-tm},
the solution of the spectral Galerkin method $X^N$ satisfies 
\begin{align*}
\|X^N(t)-X^N(s)\|_{L^p(\Omega;\HH)}
&\le C(X_0,T,p)(t-s)^{\frac \gamma  4},
\end{align*}
where $ C(X_0,T,p)>0$ and $0\le s\le t\le T$.
\end{rk}

As a result of Proposition \ref{well}, we have the following strong convergence rate of the spectral Galerkin method. 
\begin{cor}\label{strong-err-spa}
Let $X_0\in \HH^{\gamma}$, $\gamma \in (0,\frac 32)$, $T>0$, $p\ge1$ and $\sup\limits_{N\in \N^+}\|X^N_0\|_E\le C(X_0)$. Then for $\tau\in (0,\gamma)$,
there 
exists $ C(X_0,T,p)>0$ such that 
\begin{align}\label{strong-spa}
\big\|X^N-X\big\|_{L^p(\Omega;C([0,T];{\HH})}
&\le C(X_0,T,p)\lambda_N^{-\frac \tau 2}.
\end{align}
\end{cor}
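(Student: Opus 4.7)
The strategy is essentially to take the limit $M \to \infty$ in the Cauchy estimate of Theorem \ref{str-err-h1}. Proposition \ref{well} identifies $X$ as the $L^{2p}(\Omega; C([0,T]; \HH))$-limit of the approximations $\{X^M\}_{M \in \N^+}$ and, along the way, yields almost sure convergence of a subsequence in $C([0,T]; \HH)$. Thus the Cauchy rate between $X^N$ and $X^M$ should transfer directly into the desired rate between $X^N$ and the limit $X$.

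Concretely, I would first fix $N \in \N^+$ and invoke Theorem \ref{str-err-h1}, which gives for any $\tau \in (0,\gamma)$ and every $M > N$,
$$\E\Big[\sup_{t\in[0,T]} \|X^N(t) - X^M(t)\|^{2p}\Big] \le C(X_0,T,p)\, \lambda_N^{-\tau p}.$$
Next I would use the almost sure convergence of a subsequence $X^{M_k} \to X$ in $C([0,T];\HH)$ established inside the proof of Proposition \ref{well}. Together with the continuity of the norm in $\HH$ and the continuity of the supremum in $t$, this gives
$$\sup_{t\in[0,T]} \|X^N(t) - X^{M_k}(t)\|^{2p} \longrightarrow \sup_{t\in[0,T]} \|X^N(t) - X(t)\|^{2p} \quad \text{a.s.}$$
A direct application of Fatou's lemma then produces
$$\E\Big[\sup_{t\in[0,T]} \|X^N(t) - X(t)\|^{2p}\Big] \le \liminf_{k\to\infty} \E\Big[\sup_{t\in[0,T]} \|X^N(t) - X^{M_k}(t)\|^{2p}\Big] \le C(X_0,T,p)\, \lambda_N^{-\tau p},$$
and extracting the $(2p)$th root yields $\|X^N - X\|_{L^{2p}(\Omega; C([0,T];\HH))} \le C(X_0,T,p)\, \lambda_N^{-\tau/2}$, which is the claim for even exponents.

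The extension to general $p \ge 1$ is then routine by Jensen's (or H\"older's) inequality, using the uniform moment bounds of $X^N$ and $X$ in $L^q(\Omega; C([0,T];\HH))$ from Proposition \ref{prop-spa} and \eqref{pri-x}. There is no essential obstacle here: once Proposition \ref{well} has been established, this corollary is really just a passage to the limit. The only point requiring a little care is matching the mode of convergence, namely aligning a.s. subsequential convergence in $C([0,T];\HH)$ with the pathwise supremum inside the norm; this is immediate from the reverse triangle inequality for the sup-norm, so no new estimate is needed.
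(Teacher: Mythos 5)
Your proposal is correct and matches the paper's intended argument: the paper gives no written proof for this corollary, stating only that it follows from Proposition \ref{well}, and the passage to the limit $M\to\infty$ in the Cauchy estimate of Theorem \ref{str-err-h1} (via a.s.\ subsequential convergence and Fatou, or equivalently just the triangle inequality in $L^{2p}(\Omega;C([0,T];\HH))$ using the norm convergence from Proposition \ref{well}) is exactly the missing step. The reduction from general $p\ge 1$ to even exponents by monotonicity of $L^p$ norms on a probability space is likewise routine, as you say.
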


As a consequence of the strong  convergence of 
the spectral Galerkin method, the following  exponential integrability property of the mild solution holds. We would like to mention that the exponential integrability property has many applications in non-global SPDE and its numerical approximation(see e.g. \cite{CHJ13,CHLZ17,CHS18b}).
\begin{cor} 
Let $X_0\in \HH$.
There exist $\beta>0$, $c>0$ such that for $t\in [0,T]$,
\begin{align*}
&\E \Big[\exp\Big(\frac 12e^{-\beta t}\|X(t)\|_{\HH^{-1}}^2+c\int_0^t e^{-\beta s}\|X(s)\|_{L^4}^4ds+c\int_0^te^{-\beta s}\|\nabla X(s)\|^2ds\Big)\Big]\\
&\le C(X_0,T).
\end{align*} 
\end{cor}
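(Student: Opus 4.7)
The strategy is to first establish the exponential integrability for the Galerkin approximation $X^N$ uniformly in $N$, and then pass to the limit using Fatou's lemma and the strong convergence from Corollary~\ref{strong-err-spa} (extended to $X_0\in\HH$ via Remark~\ref{rk-xn}). For the finite-dimensional step I would apply the Cox--Hutzenthaler--Jentzen exponential integrability lemma (cf.~\cite{CHJ13,CHLZ17}) with Lyapunov function $U(x)=\tfrac12\|x\|_{\HH^{-1}}^2+K$ and auxiliary function $\bar U(x)=c(\|x\|_{L^4}^4+\|\nabla x\|^2)$, where $K,c,\beta>0$ will be tuned. The conclusion of the lemma reduces to verifying, uniformly in $N$ and $x\in P^N\HH$, the compatibility inequality
\[
\mathcal L^NU(x)+\tfrac12\|(P^NG(x))^*DU(x)\|_{\HH}^2+\bar U(x)\le\beta U(x).
\]
Once this holds, It\^o's formula applied to $\exp(e^{-\beta t}U(X^N_t)+\int_0^t e^{-\beta s}\bar U(X^N_s)ds)$ yields a supermartingale, hence $\sup_N\E\exp(e^{-\beta t}U(X^N_t)+\int_0^t e^{-\beta s}\bar U(X^N_s)ds)\le \exp(U(P^NX_0))\le C(X_0,T)$.

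Verifying the compatibility is the heart of the argument. Using $DU(x)=A^{-1}x$, $D^2U(x)=A^{-1}$ and integration by parts on $P^N\HH$, one has $\mathcal L^NU(x)=-\|\nabla x\|^2-\langle x,F(x)\rangle+\tfrac12 tr[P^NGG^*P^NA^{-1}]$. The polynomial structure of $f$ gives $-\langle x,F(x)\rangle\le-(4c_4-\epsilon)\|x\|_{L^4}^4+C(\epsilon)$, while the one-dimensional bound $\sum_{j}\lambda_j^{-1}e_j(\xi)^2\le C$ together with the sublinearity of $g$ yields that the trace term is at most $C(1+\|x\|_{L^4}^{2\alpha})\le\epsilon\|x\|_{L^4}^4+C(\epsilon)$ since $2\alpha<4$. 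For the It\^o correction, the one-dimensional Sobolev embedding $\HH^{\frac12+\epsilon}\hookrightarrow E$ applied to $A^{-1}x$ gives $\|A^{-1}x\|_E\le C\|x\|_{\HH^{-1}}$, hence
\[
\tfrac12\|(P^NG(x))^*A^{-1}x\|_{\HH}^2\le\tfrac12\|A^{-1}x\|_E^2\int g(x)^2d\xi\le C\|x\|_{\HH^{-1}}^2(1+\|x\|_{L^4}^{2\alpha}).
\]

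The delicate point is to absorb the cross term $\|x\|_{\HH^{-1}}^2\|x\|_{L^4}^{2\alpha}$ into the favorable dissipative term $\|x\|_{L^4}^4$. Young's inequality gives $\|x\|_{\HH^{-1}}^2\|x\|_{L^4}^{2\alpha}\le\epsilon\|x\|_{L^4}^4+C(\epsilon)\|x\|_{\HH^{-1}}^{4/(2-\alpha)}$; the embedding $\|x\|_{\HH^{-1}}\le C\|x\|_{L^2}\le C\|x\|_{L^4}$ together with the crucial strict inequality $\tfrac{4}{2-\alpha}<4$ (exactly the range $\alpha<1$) and a second Young's inequality yields $\|x\|_{\HH^{-1}}^{4/(2-\alpha)}\le\epsilon'\|x\|_{L^4}^4+C(\epsilon')$. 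Collecting these bounds and choosing first $c>0$ small (so that the coefficients of $\|\nabla x\|^2$ and $\|x\|_{L^4}^4$ remain negative), then $\beta$ large to dominate the residual $C\|x\|_{\HH^{-1}}^2$, and finally $K$ large to absorb the leftover additive constant, the compatibility inequality is established.

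For the limit passage, the strong convergence of $X^N$ yields a subsequence $X^{N_k}\to X$ almost surely in $C([0,T];\HH)$; extracting a further subsequence, $X^{N_{k_l}}(s,\xi)\to X(s,\xi)$ almost everywhere on $[0,T]\times\mathcal O$ almost surely. Hence $\|X^{N_{k_l}}(t)\|_{\HH^{-1}}\to\|X(t)\|_{\HH^{-1}}$ a.s., while Fatou's lemma applied to the pointwise integrals in $(s,\xi)$ combined with the weak lower semicontinuity of the $\HH^1$-norm (justified via the uniform bound in Proposition~\ref{prop-spa}) yields $\int_0^t e^{-\beta s}(\|X(s)\|_{L^4}^4+\|\nabla X(s)\|^2)ds\le\liminf_l\int_0^t e^{-\beta s}(\|X^{N_{k_l}}(s)\|_{L^4}^4+\|\nabla X^{N_{k_l}}(s)\|^2)ds$ almost surely. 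A final Fatou's lemma on the expectation transfers the uniform bound from $X^N$ to $X$. The main obstacle is the compatibility verification: three parameters $c$, $\beta$, $K$ must be tuned simultaneously, and the precise role of the sublinearity assumption $\alpha<1$ appears as the strict inequality $\tfrac{4}{2-\alpha}<4$ needed to control the It\^o correction.
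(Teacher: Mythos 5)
Your proposal is correct and follows essentially the same route as the paper: a uniform-in-$N$ exponential moment bound for the Galerkin approximation obtained from the Cox--Hutzenthaler--Jentzen-type exponential integrability lemma with Lyapunov functional $U(x)=\tfrac12\|x\|_{\HH^{-1}}^2$, verified by the drift estimate $-\|\nabla x\|^2-\langle x,F(x)\rangle+\tfrac12\mathrm{tr}[\cdot]+\tfrac12\|\sigma^*DU\|^2\le-(1-\epsilon)\|\nabla x\|^2-(4c_4-\epsilon)\|x\|_{L^4}^4+\epsilon\|x\|_{\HH^{-1}}^2+C(\epsilon)$, followed by Fatou's lemma and the strong convergence of $X^N$. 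In fact you supply more detail than the paper (which compresses the compatibility check into ``direct calculations and the interpolation inequality''), and your identification of $\tfrac{4}{2-\alpha}<4$, i.e.\ $\alpha<1$, as the mechanism absorbing the It\^o correction into the $\|x\|_{L^4}^4$ dissipation is exactly the point the paper leaves implicit.
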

\begin{proof}

From  the Gagliardo--Nirenberg and Young inequalities,  it follows that
\begin{align*}
\int_0^t\|X^N-X\|^4_{L^4}ds
&\le  C\int_0^t\|\nabla(X^N-X)\|^2ds
+C\int_0^t\|X^N-X\|^6ds.
\end{align*}
The similar arguments in Proposition \ref{str-con} yield that for $t\in[0,T]$, 
$$\lim_{N\to \infty}\|X^N-X\|_{L^{2}(\Omega; L^{2}([0,t];\HH^1))}=0,$$ which together with the strong convergence of $X^N$ in $C([0,t];L^{6}(\Omega; \HH))$ implies that 
$\lim_{N\to \infty}\|X^N-X\|_{L^{4}(\Omega; L^{4}([0,t];L^4))}=0.$
Thus by Fatou's lemma, it suffices to show the uniform boundedness of the exponential moment for $X^N$.

Denote $\mu(x)=-A^2x-AP^NF(x)$ and  $\sigma(x)=P^NG(x)I_{\HH}$ and $U(x)=\frac 12\|x\|_{\HH^{-1}}^2$, where $x\in P^N(\HH)$.
By direct calculations and the interpolation inequality, 
 we get 
\begin{align*}
&\<DU(x),\mu(x)\>+\frac 12\text{tr}[D^2U(x)\sigma(x)\sigma^*(x)]+\frac 12 \|\sigma(x)^*DU(x)\|^2\\
&=\<x, -A^2x+AF(x)\>_{\HH^{-1}}
+\frac 12\sum_{i\in \N^+} \|P^N(G(x)e_i)\|_{\HH^{-1}}^2
+\frac 12 \sum_{i\in \N^+}\<x,G(x)e_i\>_{\HH^{-1}}^2\\
&\le -(1-\epsilon)\|\nabla x\|^2-(4c_4-\epsilon)\|x\|_{L^4}^4+\epsilon \|x\|_{\HH^{-1}}^2+C(\epsilon).
\end{align*}
Using the exponential integrability lemma in \cite{CHL16b} and taking $\beta=\epsilon$, we have 
\begin{align*}
&\E \Big[\exp\Big(e^{-\beta t}\frac 12\|X^N(t)\|_{\HH^{-1}}^2+(4c_4-\epsilon)\int_0^t e^{-\beta s}\|X^N(s)\|_{L^4}^4ds\\
&+(1-\epsilon)\int_0^te^{-\beta s}\|\nabla X^N(s)\|^2ds\Big)\Big]\le C(X_0,T,\epsilon),
\end{align*}
which, combined with Fatou's lemma, completes the proof.
\end{proof}

\section{Conclusion}
\label{sec-con}
In this paper, we introduce a new approach to studying the global existence and regularity estimate of the solution process for stochastic Cahn--Hilliard equation driven by multiplicative space-time white noise.  Compared to the existing work, we use the 
spectral Galerkin method, instead of the cut-off equation, to approximate the original equation. Then by proving the well-posedness and a priori estimates of the approximated equation, we show that the solution $\{X^N\}_{N\in\N^+}$  
 possesses the sharp strong convergence rate and thus is a Cauchy sequence in certain Banach space. As a consequence, the limit process of $X^N$ is shown to be the global solution of stochastic Cahn--Hilliard equation and to possess the optimal regularity estimates.  

\section{Acknowledgement}
The authors are very grateful to Professor Yaozhong Hu(University of Alberta) for his helpful discussions and suggestions.

\appendix

\bibliographystyle{amsplain}
\bibliography{bib}
\end{document}